\documentclass[preprint,12pt]{elsarticle}

\usepackage{amsmath,amssymb,amsthm,mathtools}
\usepackage{booktabs}
\usepackage{array}
\usepackage{longtable}
\usepackage{float}
\usepackage{placeins}
\usepackage{graphicx}
\usepackage{microtype}
\usepackage{needspace}
\usepackage{xurl}
\usepackage[hidelinks]{hyperref}
\hypersetup{
  pdftitle={Finite-Query Coherent Projection of Black-Box Conditional Predictions},
  pdfauthor={Yuyang Wu, Fanyu Zhou, Jinfeng Zhou, Guangying Lyu}
}

\makeatletter
\patchcmd{\pprintMaketitle}{\vskip36pt}{\vskip12pt}{}{
  \PackageError{manuscript}{Unable to compact front-matter spacing}{}
}
\makeatother

\graphicspath{{figures/}}

\newtheorem{theorem}{Theorem}
\newtheorem{proposition}[theorem]{Proposition}
\newtheorem{lemma}[theorem]{Lemma}
\theoremstyle{remark}
\newtheorem{remark}[theorem]{Remark}

\begin{document}
\begin{frontmatter}

\title{Finite-Query Coherent Projection of Black-Box Conditional Predictions}
\author[nuist]{Yuyang Wu}
\author[nuist]{Fanyu Zhou}
\author[suda]{Jinfeng Zhou}
\author[nuist]{Guangying Lyu\corref{cor1}}
\ead{gylvmaths@nuist.edu.cn}
\cortext[cor1]{Corresponding author}
\address[nuist]{School of Mathematics and Statistics, Nanjing University of
Information Science and Technology, Nanjing, China}
\address[suda]{School of Computer Science and Technology, Soochow University,
Suzhou, China}

\begin{abstract}
Local conditional predictions from a black box need not admit a common
joint distribution.  We study two aspects of their coherent projection:
finite-access precision and directional fidelity.  For arbitrary
deterministic binary conditional tables, we establish matching expected
minimax excess conditional-Kullback--Leibler risk
$\Theta(\min\{R\sqrt{d/B},d^2/B\})$ under independent uniform exact-table
queries, with repeated draws charged, for a local bounded pairwise joint
class.  The result holds for sufficiently large $d$, $R\sqrt d\le M_0$, and
$Cd^3\le B\le(d2^{d-1})^{1/3}$, where $M_0$ is fixed.
A moment estimator attains the upper bound.  The main technical
contribution is a matching lower-bound construction that respects shared
pairwise interactions and frozen exact-table responses, with comparison
to the best projection of the same realized table.
We then examine directional fidelity in a computable hard-marginal
formulation that preserves direct black-box marginals.  A preregistered
TabPFN-v2 case study combines a Car anchor with a frozen two-dataset,
three-split extension.
All $1{,}152$ extension optimization instances pass independent numerical
checks at registered tolerances.  Directional contrasts are dataset-dependent
and asymmetric: optimizing one direction need not improve the other.
Together, the analyses distinguish accurate approximation of a selected
projection from improvement in conditional directions outside its fitted
objective.
\end{abstract}

\begin{keyword}
probabilistic coherence \sep joint reconstruction \sep query complexity
\sep black-box conditional prediction
\end{keyword}

\end{frontmatter}

\section{Introduction}
\label{sec:introduction}

Black-box conditional predictors provide local probabilistic answers that
may not agree with any single joint distribution.  For example, one binary
conditional table can make $X_1$ independent of $X_2$, while another makes
the distribution of $X_2$ depend on $X_1$; these cannot describe the same
positive joint law.  A coherent joint supplies a common source for
conditional and event probabilities used in combined reasoning.

Compatibility, joint construction, and compromises between incompatible
conditionals have a substantial statistical literature
\cite{arnold2001conditionally,arnold2002exact,kuo2017exactly,
arnold2022conditional}.  Probabilistic knowledge-base repair also studies
consistency-restoring adjustments
\cite{picadomuino2011repairing,potyka2017inconsistency}.
We build on these questions rather than propose joint construction or
consistency repair as new objectives.

Coherent reconciliation raises two questions: how accurately a specified
projection can be estimated from limited access, and which conditional
directions that projection fits well.  A learner observes black-box
probability answers and returns a joint in a chosen student class.  Its
conditional-Kullback--Leibler (KL) risk is evaluated against the same
teacher, under a specified distribution over conditional queries.
Figure~\ref{fig:reconciliation-pipeline} shows this projection task.
Comparing with the best member of the student class separates finite-access
error from disagreement that remains even at the projection optimum
(Section~\ref{sec:problem}).

\begin{figure}[htbp]
  \centering
  \includegraphics[width=0.82\linewidth]{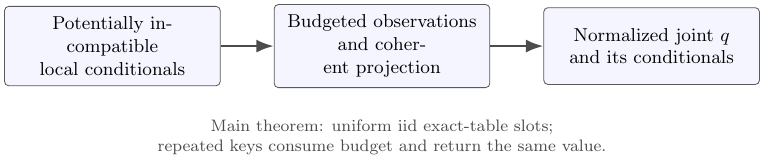}
  \caption{Finite-query coherent projection. Local conditional predictions
  are approximated by conditionals of a single joint. The sampling rule
  beneath the pipeline applies to the main theorem. The diagram was typeset in TikZ with assistance from OpenAI Codex (OpenAI),
  using GPT-5.6 Sol (model identifier: \texttt{gpt-5.6-sol}), and was reviewed
  and edited by the authors.}
  \label{fig:reconciliation-pipeline}
\end{figure}

Exact probability answers remove label-sampling noise at observed keys,
but leave unobserved entries of an arbitrary table unknown.  The first
question is therefore whether, and at what risk, a limited transcript
determines the best coherent projection.  We study this question for a
local pairwise joint class, whose undirected edge parameters are shared
across conditional directions.  A matching lower bound must respect this
design and compare with the optimum of the same realized teacher table.

Even an accurately computed projection need not fit every conditional
direction better: its objective determines which discrepancies are
penalized.  We examine this second question in a computable small-joint
setting that preserves direct black-box marginals.  Frozen model
predictions allow us to assess whether fitting one conditional direction
also improves the other.  The case study thus complements the finite-access
analysis by examining directional fidelity under a chosen projection
objective; its output class and observation protocol are different.

\paragraph{Contributions}
\begin{enumerate}
 \item Finite-access characterization.  We establish matching upper and
 lower bounds for the expected excess conditional-KL risk of projecting
 arbitrary deterministic binary conditional tables onto a local bounded
 pairwise joint class.  Under uniform passive exact-table queries, the
 result characterizes the minimax dependence on budget, dimension, and
 student radius over the stated window.  The main technical contribution
 is a matching lower-bound construction that jointly respects the legal
 shared-edge design, frozen exact-table responses, and comparison with the
 best projection of the same realized table.
 \item Directional trade-offs under fixed marginals.  In a preregistered
 frozen TabPFN-v2 study, we examine whether fitting one conditional
 direction improves the opposite direction while preserving direct
 marginals.  A confirmatory Car anchor and six external dataset--split
 units provide independent numerical checks and reveal dataset-dependent,
 asymmetric transfer: improvement in the fitted objective does not ensure
 improvement in the omitted direction.
\end{enumerate}

Supporting coverage, rank, and fixed-design results clarify these claims.
An independent restricted response-adaptive conditional-table task is
retained in Appendices~\ref{app:auxiliary-query-design}--\ref{app:supplementary-studies}.
Its output is not a joint, and its real-structure weighting study includes a
dataset-level reversal; it is not evidence for active coherent reconstruction.

Section~\ref{sec:related-work} compares the nearest compatibility and
statistical estimation results.  Section~\ref{sec:problem} defines the
projection risk and observation protocols.  Section~\ref{sec:theory}
states the full passive characterization and its limits.
Section~\ref{sec:method} gives the hard-marginal formulation, and
Section~\ref{sec:experiments} reports the frozen case study.
Section~\ref{sec:limitations} discusses the scope before the conclusion.
\section{Related Work}
\label{sec:related-work}

Two bodies of work are especially close: constructing coherent compromises
from conditionals, and estimating a best-in-class model with limited
information.
Table~\ref{tab:prior-work} compares selected results by observation,
output, and target; a broader comparison is retained in
Appendix~\ref{app:supplementary-studies}.

\paragraph{Compatibility and probabilistic repair}
Classical conditional specification studies existence, uniqueness, and
construction of joint laws~\cite{arnold2001conditionally,arnold2022conditional}.
Arnold et al.~\cite{arnold2002exact} formulate finite discrete exact,
near, and $\epsilon$-compatibility, while
Chen~\cite{chen2010compatibility} uses odds-ratio representations.
Kuo et al.~\cite{kuo2017exactly} introduce structural ratio matrices for
compatible and almost-compatible high-dimensional discrete conditional
models.  These are direct precedents for joint construction, including the
incompatible case; high dimension alone does not distinguish our objective.

Compromise criteria also predate this work.  Gibbs ensembles
\cite{chen2011gibbs} and random-scan pseudo-Gibbs compromises
\cite{mure2019optimal} treat inconsistent conditional specifications.
Dependency networks~\cite{heckerman2000dependency} allow local inconsistency,
and subset-conditional consistency is studied by
Majid et al.~\cite{majid2025consistent}.  Picado Mui\~no
\cite{picadomuino2011repairing}, Thimm~\cite{thimm2013inconsistency}, and
Potyka and Thimm~\cite{potyka2014consolidation,potyka2017inconsistency}
develop probabilistic inconsistency measurement, consolidation, and
reasoning with generalized models.  Divergence-based recovery and merging
also appear in probabilistic knowledge-based systems
\cite{nguyen2021probabilistic}.

Conditional-to-joint construction and conditional-closeness objectives also
occur for masked language models~\cite{torroba2023deriving}.  Conditional
probability queries can recover low-rank Bayesian-network structure under
appropriate compatibility assumptions~\cite{barik2019learning}.
Tractable probabilistic-circuit distillation~\cite{liu2023understanding}
addresses another student capability, while coordinate-conditional sampling
can support identity testing~\cite{blanca2023complexity}.  These query units,
outputs, and assumptions are not interchangeable with an exact frozen-table
projection risk.

\paragraph{Finite information and best-in-class estimation}
Derezi\'nski et al.~\cite{derezinski2019minimax} study limited-response
experimental design and least-squares estimation, including arbitrary fixed
responses.  Their selectable designs, squared-loss objective, and estimator
criteria differ from our uniform passive legal rows and bounded proper
logistic output.

Mourtada and Ga\"iffas~\cite{mourtada2022improper} give an improper estimator
and excess-risk bounds for misspecified density estimation and logistic
regression.  Their hard-label observation model and potentially out-of-class
predictor differ from our exact probability responses and required coherent
student.  Loss similarities do not make the experiments equivalent:
probability responses can generate labels, so applicable label-based upper
bounds may transfer, but a hard-label lower bound does not automatically
lower-bound the more informative probability oracle.

Moment and curvature arguments give our upper bound; stochastic averaging
offers another route~\cite{lacostejulien2012simpler}.  Matching concerns
the shared-edge design, frozen observations, proper bounded output, and
realized-table comparator.

\paragraph{Constrained fitting and the TFM case study}
Information projection~\cite{csiszar1975idivergence}, Bregman methods
\cite{bregman1967relaxation,benamou2015iterative}, and constrained marginal
fitting~\cite{evans2013two} provide precedents for our hard-marginal
formulation.  Section~\ref{sec:method} specifies its forward conditional
KL, cell weights, lower bounds, and numerical checks.

TabPFN~\cite{hollmann2025tabpfn} supplies the frozen case-study predictions.
TFM consistency diagnostics~\cite{klotergens2026agree} motivate the question,
while the heterogeneity of tabular learning
\cite{shwartzziv2022tabular,borisov2024deep} cautions against generalizing
from one checkpoint.

\begingroup
\setlength{\intextsep}{3pt}
\setlength{\abovecaptionskip}{3pt}
\begin{table}[!htb]
 \centering
 \footnotesize
 \linespread{0.95}\selectfont
 \caption{Closest joint-construction and estimation results in their stated settings.}
 \label{tab:prior-work}
 \begin{tabular}{@{}>{\raggedright\arraybackslash}p{0.23\linewidth}
                    >{\raggedright\arraybackslash}p{0.33\linewidth}
                    >{\raggedright\arraybackslash}p{0.36\linewidth}@{}}
 \toprule
 Work & Observation and target & Relevant distinction \\
 \midrule
 Arnold et al.; Kuo et al.~\cite{arnold2002exact,kuo2017exactly} &
 Discrete conditional specifications; compatible and near-compatible joints &
 Joint compatibility and compromise, versus finite-transcript excess risk. \\
 \addlinespace[1pt]
 Picado Mui\~no; Potyka--Thimm~\cite{picadomuino2011repairing,potyka2017inconsistency} &
 Explicit probabilistic statements; repair and generalized models &
 Repair and generalized reasoning, versus partial-access estimation of
 the same-table projection. \\
 \addlinespace[1pt]
 Torroba Hennigen--Kim~\cite{torroba2023deriving} &
 Masked-LM conditionals; joint construction by conditional closeness &
 Conditional-closeness objective; here the student class and access
 protocol are fixed. \\
 \addlinespace[1pt]
 Derezi\'nski et al.~\cite{derezinski2019minimax} &
 Hidden fixed responses; selectable least-squares design &
 Squared loss and selectable design, versus shared-edge logistic
 projection. \\
 \addlinespace[1pt]
 Mourtada--Ga\"iffas~\cite{mourtada2022improper} &
 Labels; misspecified log loss with improper output &
 Hard labels and improper output, versus exact probabilities and bounded
 proper output. \\
 \addlinespace[1pt]
 This work &
 Arbitrary binary tables; uniform iid exact responses; pairwise joint output &
 Matching same-table projection excess risk within the stated radius and
 budget window. \\
 \bottomrule
 \end{tabular}
\end{table}
\FloatBarrier
\endgroup
\section{Problem Formulation}
\label{sec:problem}

Let $X=(X_1,\ldots,X_d)$ be finite categorical variables, with
$X_j\in\mathcal X_j$ and product alphabet
$\mathcal X:=\prod_{j=1}^d\mathcal X_j$.  Write $x\in\mathcal X$ for a
complete realization.  For an index set
$T\subseteq[d]:=\{1,\ldots,d\}$, $x_T$ denotes the coordinates indexed by
$T$, and $-T:=[d]\setminus T$ (in particular, $-j=[d]\setminus\{j\}$).
A black-box oracle $\mathcal O$ maps a query key $z=(j,S,\bar x_S)$, with
$S\subseteq-j$, to a categorical probability vector
$\widetilde p_j(\cdot\mid \bar x_S)$ on $\mathcal X_j$, where
$\bar x_S\in\prod_{k\in S}\mathcal X_k$ is the fixed conditioning
assignment.  The responses need not
arise from a common joint distribution.  We write
$\sigma(t)=(1+\exp(-t))^{-1}$ for the logistic link used below, and
$\operatorname{Ber}(\pi)$ for the Bernoulli distribution on $\{0,1\}$ with
success probability $\pi$.  We also write
\[
  \Delta_+(\mathcal X)
  :=\left\{q:\mathcal X\to(0,1):
  \sum_{x\in\mathcal X}q(x)=1\right\}
\]
for the strictly positive probability simplex.  Each
$q\in\Delta_+(\mathcal X)$ induces conditionals by marginalization:
\begin{equation}
  q_j(a\mid \bar x_S)
  =
  \frac{\displaystyle
    \sum_{\substack{x\in\mathcal X:\;x_j=a,\;x_S=\bar x_S}}q(x)}
       {\displaystyle
    \sum_{\substack{x\in\mathcal X:\;x_S=\bar x_S}}q(x)},
  \qquad a\in\mathcal X_j .
  \label{eq:induced-conditional}
\end{equation}
Both sums range over complete configurations; positivity makes the
denominator nonzero.  Global coherence means that every reported student
conditional is induced by this same normalized joint.

For probability vectors $p$ and $r$ on the same finite alphabet, define the
forward Kullback--Leibler divergence~\cite{kullback1951information} by
\[
  D_{\mathrm{KL}}(p\|r)
  :=\sum_a p(a)\log\frac{p(a)}{r(a)},
\]
with $0\log(0/r):=0$.  All student probabilities below are positive, so no
division by zero occurs.

For a fixed evaluation distribution $\mu$ over query keys, we measure
teacher-to-student conditional disagreement by
\begin{equation}
  R_\mu(q;\mathcal O)
  =\mathbb E_{(j,S,\bar x_S)\sim\mu}
  D_{\mathrm{KL}}\!\left(
    \widetilde p_j(\cdot\mid \bar x_S)\,\middle\|\,
    q_j(\cdot\mid \bar x_S)
  \right).
  \label{eq:conditional-kl-risk}
\end{equation}
Fixing $\mu$ is part of the task definition: a best coherent joint is not
well-defined without an evaluation distribution.  For a coherent student class
$\mathcal Q\subseteq\Delta_+(\mathcal X)$ and a budget-$B$ output
$\widehat q_B\in\mathcal Q$, introduce the two population risk floors
\[
  R_\mu^\star(\mathcal O)
  :=\inf_{q\in\Delta_+(\mathcal X)}R_\mu(q;\mathcal O),
  \qquad
  R_{\mu,\mathcal Q}^\star(\mathcal O)
  :=\inf_{q\in\mathcal Q}R_\mu(q;\mathcal O).
\]
We then define
\begin{align}
  I_\mu(\mathcal O)
  &:=R_\mu^\star(\mathcal O),
  \label{eq:irreducible-risk}\\
  A_{\mu,\mathcal Q}(\mathcal O)
  &:=R_{\mu,\mathcal Q}^\star(\mathcal O)
      -R_\mu^\star(\mathcal O),
  \label{eq:approximation-risk}\\
  E_{B,\mu,\mathcal Q}(\widehat q_B;\mathcal O)
  &:=R_\mu(\widehat q_B;\mathcal O)
     -R_{\mu,\mathcal Q}^\star(\mathcal O).
  \label{eq:excess-risk}
\end{align}
These definitions give the accounting identity
\begin{equation}
  R_\mu(\widehat q_B;\mathcal O)
  =I_\mu(\mathcal O)+A_{\mu,\mathcal Q}(\mathcal O)
   +E_{B,\mu,\mathcal Q}(\widehat q_B;\mathcal O).
  \label{eq:risk-decomposition}
\end{equation}
This telescoping identity is not an additional statistical guarantee.
The terms are nonnegative: irreducible disagreement, class approximation,
and finite-query or optimization excess, respectively.  Only the last
depends on the learner's transcript and can be random; we abbreviate it as
$E_{B,\mathcal Q}$ when the other arguments are fixed, and take expectations
after the pointwise decomposition.

Crucially, $I_\mu=0$ concerns only positive-$\mu$ keys; global oracle
compatibility additionally requires sufficient coverage and, when relevant,
attainment by a positive joint.  Also, $A_{\mu,\mathcal Q}$ includes all
student restrictions, such as hard marginals, not just pairwise structure.
The restricted optimum must not be relabelled as pure incompatibility.

\paragraph{Notation used in the theory}
For vectors $u,v\in\mathbb R^m$, $\langle u,v\rangle=u^\top v$ and
$\|u\|_2=(\sum_{r=1}^m u_r^2)^{1/2}$.  The indicator of an event $E$ is
$\mathbf 1_{\{E\}}$.  Vector inequalities are componentwise unless a
matrix order is explicitly stated.

\paragraph{Query protocols and budget semantics}

The oracle model and budget convention are part of every claim.  The main
theory uses exact deterministic conditional-probability tables.  Its
passive-i.i.d. protocol samples query keys independently from a stated design
distribution; repeated occurrences of a key consume design slots but return
the same frozen table entry.  They are not independent noisy observations and
do not entitle the learner to draw a replacement key.  Quantized and noisy
response models have distinct information budgets and are not silently
identified with this exact-table protocol.

An adaptive policy chooses keys using its transcript, whereas a static
policy fixes its allocation before observing responses.  The main matching
theorem uses neither adaptive selection nor a guaranteed-distinct budget.
The supplementary adaptive construction
(Appendix~\ref{app:auxiliary-query-design}) instead reports a non-joint
conditional table and has a different loss.  Its guarantee does not
transfer to coherent reconstruction.

Table~\ref{tab:claim-evidence-bridge} identifies the complementary roles of
finite-access precision and directional fidelity, together with their
distinct protocols and the supplementary studies.

\begin{table}[!htbp]
 \centering
 \small
 \caption{Finite-access precision and directional fidelity: complementary
 projection tasks with distinct protocols and validation targets.}
 \label{tab:claim-evidence-bridge}
 \begin{tabular}{@{}>{\raggedright\arraybackslash}p{0.20\linewidth}
                    >{\raggedright\arraybackslash}p{0.73\linewidth}@{}}
 \toprule
 Task & Observation, target, and evidence \\
 \midrule
 Passive theory &
 Arbitrary deterministic binary full-conditional table; uniform iid exact
 slots, repeats charged.  Same-table best bounded pairwise projection;
 matching expected excess conditional KL within
 Theorem~\ref{thm:passive-minimax}'s radius and budget window. \\
 \addlinespace
 TFM case study &
 Frozen direct marginals and both conditional families; small hard-marginal
 joint tables.  Independent numerical checks and within-unit directional
 risk contrasts, not passive-rate or predictive-accuracy validation. \\
 \addlinespace
 Supplementary tasks &
 Quantized fixed-pool recovery and noisy non-joint table prediction have
 separate losses and budgets
 (Appendices~\ref{app:auxiliary-query-design}--\ref{app:supplementary-studies}).
 Their outcomes are not pooled with either main task. \\
 \bottomrule
 \end{tabular}
\end{table}
\FloatBarrier
\section{Passive Finite-Query Characterization}
\label{sec:theory}

How accurately can exact local probability answers determine a coherent
projection when only a fraction of the conditional table is observed?
The main result gives matching upper and lower bounds for arbitrary
deterministic binary teacher tables and a local bounded pairwise student
class.  Shared undirected parameters enforce coherence of the output;
the teacher need not be compatible with any joint.

\subsection{Coherent pairwise output}

For $X\in\{-1,+1\}^d$, set $m:=d+d(d-1)/2$ and write
\[
  \theta
  :=\bigl(b_1,\ldots,b_d,(w_{jk})_{1\le j<k\le d}\bigr)^\top
  \in\mathbb R^m,
\]
The edge block is ordered lexicographically by $(j,k)$, and $w_{jk}=w_{kj}$
when referenced in either order.  The positive pairwise joint is
\begin{equation}
  q_\theta(x)
  :=\frac{1}{\mathcal N(\theta)}
  \exp\!\left\{
    \frac12\sum_{j=1}^d b_jx_j
    +\frac12\sum_{1\le j<k\le d}w_{jk}x_jx_k
  \right\},
  \label{eq:pairwise-student}
\end{equation}
where $\mathcal N(\theta)$ is the finite normalizing constant.  At a full-conditional
key $z=(j,x_{-j})$, define $a(z)\in\mathbb R^m$ to have entry $1$ at the
$j$th intercept, entry $x_k$ at each edge $\{j,k\}$ incident to $j$, and zero
elsewhere.  Then
\begin{equation}
  q_\theta(X_j=+1\mid X_{-j}=x_{-j})
  =\sigma\!\left(b_j+\sum_{k\ne j}w_{jk}x_k\right)
  =\sigma(a(z)^\top\theta).
  \label{eq:pairwise-logit}
\end{equation}
Thus one shared edge parameter is used in both conditional directions.
Write $\Theta_R:=\{\theta\in\mathbb R^m:\|\theta\|_2\le R\}$ and
$\mathcal Q_R:=\{q_\theta:\theta\in\Theta_R\}$.

\subsection{Uniform passive exact-table protocol}
\label{sec:passive-protocol}

For clarity, let
$\mathcal Z:=\{(j,x_{-j}):j\in[d],
x_{-j}\in\{-1,+1\}^{d-1}\}$ and let $\mathcal P_{\rm all}$ be the set of
all deterministic tables
$p:\mathcal Z\to[0,1]$ on the $N=d2^{d-1}$ full-conditional keys.
Values zero and one are allowed; the student conditionals remain strictly
positive, so the KL risk is finite under the convention in
Section~\ref{sec:problem}.
Throughout this subsection, $\mu$ is uniform on $\mathcal Z$.
Under $B$ independent uniform draws from
$\mathcal Z$, let $\mathcal O_p$ denote the oracle returning $p(z)$ and define
\begin{equation}
  \mathfrak M_{B,\mathrm{iid}}^{\mathrm{agn}}(d,R)
  :=\inf_{\mathcal A}\sup_{p\in\mathcal P_{\rm all}}
  \mathbb E\!\left[
    R_\mu(q_{\widehat\theta};\mathcal O_p)
    -\inf_{\theta\in\Theta_R}R_\mu(q_\theta;\mathcal O_p)
  \right],
  \label{eq:main-minimax-definition}
\end{equation}
where $\mathcal A$ ranges over randomized learners whose output lies in
$\Theta_R$, and the expectation is over the passive design and learner
randomization.  
Set $\kappa_0:=\sigma(M_0)(1-\sigma(M_0))$ for a fixed $M_0>0$.
The following statement includes the radius and budget restrictions;
Appendix~\ref{sec:minimax-theory} supplies its complete proof.

The upper bound holds uniformly over $\mathcal P_{\rm all}$
(Proposition~\ref{prop:moment-upper}).  For the lower bound, it suffices to
use the subclass with entries in $\{1/4,3/4\}$, obtained by fixing
$\gamma=1/4$ in Appendix~\ref{sec:minimax-theory}'s frozen-table
construction.  Inclusion of this subclass in $\mathcal P_{\rm all}$ gives
the same lower bound for the full teacher class.  Thus the theorem does
not require teacher probabilities to be close to $1/2$; the local radius
restriction concerns the student.

\Needspace{16\baselineskip}
\begin{theorem}[Passive exact-table minimax rate]
\label{thm:passive-minimax}
Fix $M_0>0$.  There exist constants
$c,C,d_0>0$, depending only on $M_0$, such that if
\[
  d\ge d_0,
  \qquad B\in\mathbb N,
  \qquad 0<R,
  \qquad R\sqrt d\le M_0,
  \qquad Cd^3\le B\le(d2^{d-1})^{1/3},
\]
then
\begin{equation}
  c\min\left\{
    R\sqrt{\frac dB},\frac{d^2}{B}
  \right\}
  \le
  \mathfrak M_{B,\mathrm{iid}}^{\mathrm{agn}}(d,R)
  \le
  \min\left\{
    2R\sqrt{\frac dB},
    \frac{d^2}{2\kappa_0B}
  \right\}.
  \label{eq:matching-rate}
\end{equation}
Consequently, over this window,
\[
  \mathfrak M_{B,\mathrm{iid}}^{\mathrm{agn}}(d,R)
  =\Theta_{M_0}
  \left(\min\{R\sqrt{d/B},d^2/B\}\right).
\]
\end{theorem}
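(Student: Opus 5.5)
The proof has two parts. The upper bound comes from a moment estimator analysed via convexity and curvature. The lower bound comes from a frozen-table packing construction with a same-table comparator, combined with an Assouad-type argument.

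\emph{Upper bound.} Write the risk as $R_\mu(q_\theta;\mathcal O_p)=\mathbb E_z \ell_z(\theta)$, where
\[
\ell_z(\theta)=D_{\mathrm{KL}}\bigl(\operatorname{Ber}(p(z))\,\|\,\operatorname{Ber}(\sigma(a(z)^\top\theta))\bigr).
\]
Up to a $\theta$-free constant,
\[
\ell_z(\theta)=\log(1+e^{a^\top\theta})-p(z)\,a^\top\theta .
\]
Hence the gradient is $\nabla R=\mathbb E[(\sigma(a^\top\theta)-p)a]$ and the Hessian is $\mathbb E[\sigma'(a^\top\theta)aa^\top]$.

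The design $\mathbb E[aa^\top]$ under uniform $\mu$ is easy to compute. The intercept block contributes $1/d$ per coordinate. Each edge $\{j,k\}$ is active when $j$ or $k$ is the queried coordinate, so it contributes $2/d$. Cross terms vanish because the $x_k$ are independent uniform signs. Thus $\Sigma:=\mathbb E[aa^\top]$ is diagonal with entries of order $1/d$.

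For $\theta\in\Theta_R$ we have $|a^\top\theta|\le\|a\|_2R\le\sqrt d\,R\le M_0$. So on the ball the risk is $\kappa_0$-strongly convex relative to $\Sigma$, and $\sigma'\le 1/4$ bounds it from above.

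The estimator is the empirical moment vector $\widehat g=\frac1B\sum_i(1/2-p(z_i))a(z_i)$, which is unbiased for $\nabla R(0)$. The learner takes one quadratic/Newton-type step from $0$, preconditioned by $\Sigma^{-1}$, and projects onto $\Theta_R$. Repeated keys are harmless, since each draw is still an i.i.d. sample of the vector $(1/2-p)a$.

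This gives two bounds:
\begin{itemize}
\item \textbf{Slow rate.} Convexity yields excess risk at most $2R\,\mathbb E\|\widehat g-\nabla R(0)\|_{\ast}$ plus a curvature remainder that is absorbed using $R\sqrt d\le M_0$. The dual norm variance is $\mathbb E\|a\|_2^2/B=d/B$, giving $2R\sqrt{d/B}$.
\item \textbf{Fast rate.} Curvature yields excess risk of order $\frac{1}{2\kappa_0}\mathbb E\|\widehat g-g\|^2_{\Sigma^{-1}}$. This is at most $\operatorname{tr}(\Sigma^{-1}\Sigma)/(\kappa_0 B)\cdot\tfrac{1}{4}\cdot$const, which is of order $m/(\kappa_0B)\approx d^2/(2\kappa_0B)$.
\end{itemize}
The $d^3\le B$ condition helps keep the one-step linearization error dominated. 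I expect this is the content of Proposition~\ref{prop:moment-upper}.

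\emph{Lower bound.} This is the main obstacle. I would choose teacher tables with entries in $\{1/4,3/4\}$, indexed by sign vectors $\tau\in\{\pm1\}^{m}$ over the edge (and intercept) coordinates. For a fixed small $\gamma$-perturbation scale $\delta$, set the log-odds of $p_\tau(z)$ to $\operatorname{logit}(1/2)$ plus a deterministic hash component of size $\log 3$. The key point is that every table is \emph{frozen arbitrary data}: the tables differ from one another only on keys that are unobserved with high probability.

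Concretely, partition $\mathcal Z$ into blocks. The table's best projection $\theta^\star(p_\tau)$ has edge coordinate $\tau_e\delta$, because the empirical correlation of $p$ with $x_jx_k$ is tilted by $\tau_e$ on a random-looking subset of keys of mass $\asymp\delta$. Since $B\le N^{1/3}$, a birthday bound shows the transcript sees only $B$ distinct keys, with few collisions. Conditional on which keys are observed, a learner can infer $\tau_e$ only through the sampled rows, each carrying signal of order $\delta$ per incident edge.

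Using Assouad's lemma, the per-coordinate testing affinity stays bounded below when $B\delta^2/d\lesssim1$. Equivalently, each edge appears in about $B/d$ rows; the sign tilt must be hidden by the $\{1/4,3/4\}$ randomization of the table, which I would make a random-design mixture and then derandomize.

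For the separation, strong convexity on the ball shows that misestimating $\tau_e$ costs risk of order $\kappa_0\delta^2/d$ per edge. Summing over $\asymp d^2$ edges gives excess risk of order $d^2\cdot\delta^2/d=d\delta^2$. Taking $\delta^2\asymp d/B$ yields $d^2/B$, provided $\theta^\star\in\Theta_R$, i.e.\ $\sqrt{m}\,\delta\le R$. Otherwise I cap $\delta=R/\sqrt m$, or restrict to about $d$ active coordinates, which gives $R\sqrt{d/B}$.

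The delicate point is controlling $\theta^\star(p_\tau)$ for the \emph{realized} table, rather than its expectation. I would handle it with a concentration argument over the $2^{d-1}$ keys, using the condition $d\ge d_0$ and $Cd^3\le B$ so that the table-level fluctuation of order $\sqrt{d/N}$ is negligible against $\delta$. The final constants $c,C,d_0$ depend only on $M_0$ through $\kappa_0$.
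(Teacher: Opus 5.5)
\textbf{The $R\sqrt{d/B}$ branch of your lower bound is missing.} Both of your fallbacks keep the realized optimum $\theta^*$ inside $\Theta_R$: capping $\delta=R/\sqrt m$, or perturbing only about $d$ coordinates. For any such table the trivial learner $\widehat\theta=0$ already has excess risk at most $\frac1{4d}\|\theta^*\|_2^2\le R^2/(4d)$, since $\nabla\mathcal R(\theta^*;p)=0$ and $\nabla^2G\preceq\frac1{2d}I_m$. So no prior on interior-optimum tables can certify more than order $R^2/d$. Moreover, $R^2/d\le R\sqrt{d/B}$ holds exactly when $B\le d^3/R^2$, which is exactly where $R\sqrt{d/B}$ is the smaller branch; for small $R$ this is the whole window. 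With only $\asymp d$ perturbed coordinates, even a boundary optimum gives only $\|h\|_2R\asymp R\delta/\sqrt d$.

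What you need is a boundary argument with all $r\asymp d^2$ edge coordinates at full scale $\delta\asymp\sqrt{d/B}$. Then the mean moment shift $h_\omega=(2K/d)t_\omega$ has norm $H\asymp K\delta$, and once $\delta d\ge C_{\rm bd}R$ it dominates the curvature term. Decode by $\arg\max_\nu u_\nu^\top\widehat\theta$ with $u_\nu=t_\nu/\|t_\nu\|_2$. Code separation gives $u_\omega^\top u_\nu\le3/4$, so a decoding error forces $u_\omega^\top\widehat\theta\le R\sqrt{7/8}$. Compare with the feasible point $Ru_\omega$, using $\Phi\ge0$ and $\Phi(Ru_\omega)\le R^2/(4d)$. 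This gives excess at least $(1-\sqrt{7/8})HR-R^2/(4d)-2R\|e_{\omega,S}\|_2\gtrsim R\delta$. This is a first-order angular loss at a boundary optimum, which no quadratic strong-convexity or per-coordinate Assouad bound supplies. You must also close the constant-factor band between the interior condition $\delta d\le c_{\rm in}R$ and the boundary condition, for example by monotonicity of the minimax risk in $B$.

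Two smaller points. First, your \emph{deterministic hash} framing cannot give $\delta\asymp\sqrt{d/B}$. With fixed known tables only total variation is available, about $B\delta/d$ per flipped edge, which forces $\delta\lesssim d/B$. You need the prior with independent signs $\mathbb E[S_z\mid\omega]=c_2a_E(z)^\top t_\omega$ followed by Yao's principle, and $B\ge Cd^3$ is what keeps these biases in $[-1/2,1/2]$. Second, $\theta^*(p_\tau)$ is not coordinatewise $\tau_e\delta$. A Varshamov--Gilbert code with Fano avoids needing this: use $\|\theta^0_\omega-\theta^0_{\omega'}\|_2\ge2d\|h_\omega-h_{\omega'}\|_2$ and $\|\theta^*_{\omega,S}-\theta^0_\omega\|_2\le(d/\kappa_0)\|e_{\omega,S}\|_2$. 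The latter is controlled on the good-table event via $B\le N^{1/3}$, not via $B\ge Cd^3$.

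\textbf{Your upper-bound estimator must also change.} Take a single $\Sigma^{-1}$-preconditioned Newton step from $0$, then a Euclidean projection. Its bias relative to the constrained minimizer $\theta^*$ of $G(\theta)-b_p^\top\theta$ does not shrink with $B$. One reason is that $G$ is not quadratic. Another is that, even for a quadratic, Euclidean projection of the unconstrained Newton point is not the constrained minimizer, because $\Sigma=\operatorname{diag}(I_d/d,\,2I_r/d)$ is not isotropic. The target $\min\{2R\sqrt{d/B},d^2/(2\kappa_0B)\}$ becomes exponentially small as $B$ approaches $N^{1/3}$, so a fixed bias cannot be absorbed, and a lower limit $B\ge Cd^3$ cannot help. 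Use instead the exact moment M-estimator $\widehat\theta_B\in\arg\min_{\theta\in\Theta_R}\{G(\theta)-\widehat b_B^\top\theta\}$ with $G$ known. Optimality gives excess at most $e^\top(\widehat\theta_B-\theta^*)\le2R\|e\|_2$. Strong convexity with constant $\kappa_0/d$ on $\Theta_R$ gives excess at most $d\|e\|_2^2/(2\kappa_0)$. With $\mathbb E\|e\|_2^2\le d/B$, your two computations then go through with no remainder.
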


Each repeated key consumes a slot and returns the same frozen response.
The learner cannot choose the key distribution or replace a repeated draw.
The comparison is with the best member of $\Theta_R$ for the same realized
table, not with a joint presumed to generate that table.

\subsection{Why exact answers leave finite-access uncertainty}

Exact responses remove label-sampling noise at an observed key, but do not
reveal unobserved entries of an arbitrary table.  The projection depends
on a moment over all keys, so a limited passive transcript can leave its
optimum uncertain.  The lower bound quantifies this uncertainty within the
shared-edge design, rather than importing a hard-label lower bound into
a more informative observation model.

Write $A(t)=\log(1+e^t)$,
$G(\theta)=\mathbb E_\mu A(a(Z)^\top\theta)$, and
$b_p=\mathbb E_\mu[p(Z)a(Z)]$.  Up to a teacher-dependent constant, the
population risk is $G(\theta)-b_p^\top\theta$.  Replacing $b_p$ by its
sample average yields the moment estimator in
Eq.~\eqref{eq:moment-estimator}.  Its upper bound follows from a
second-moment estimate and logistic curvature
(Proposition~\ref{prop:moment-upper}).  These are standard estimation tools:
projected stochastic subgradient averaging also gives the familiar
strongly convex rate~\cite{lacostejulien2012simpler}.  The contribution is
the matching characterization for the specified projection experiment,
rather than a new general optimizer.

The lower bound must additionally respect the fixed legal feature rows and
the exact-response protocol.  Its edge-code construction generates a
deterministic table before observation; querying a repeated key reveals no
new sign.  Concentration of the realized table moment then connects code
separation to the optimum of that actual table.  Interior and boundary
arguments, with a transition step, give the two regimes.  The proof uses
standard information lower-bound tools~\cite{yao1977probabilistic,
tsybakov2009introduction}; its problem-specific work is the legal frozen-table
construction and the realized-table comparison.

The estimator treats the population design term $G$ as known.  Evaluating
its exponential sum is not asserted to take polynomial time.  The theorem
is therefore an information result, not computational minimax optimality.

\subsection{Rate interpretation and student-class scope}

Let $S=R\sqrt d$ and $m=d(d+1)/2$.  Since $m$ is of order $d^2$, the rate
can equivalently be written, up to constants, as
$\min\{S/\sqrt B,m/B\}$.  The two displayed branches meet at
$B=d^4/S^2$.  These are familiar norm- and dimension-dependent scales;
the theorem identifies their matching role for the present observation
and output constraints.  For a given radius, the crossover need not lie
inside the stated budget window.

The radius restriction has a substantive modeling effect.  The uniform
design covariance in Lemma~\ref{lem:covariance} implies
\begin{equation}
 \mathbb E_\mu[(a(Z)^\top\theta)^2]
 \le \frac{2R^2}{d}
 \le \frac{2M_0^2}{d^2},
 \qquad \theta\in\Theta_R.
 \label{eq:student-signal-scope}
\end{equation}
Thus the root-mean-square conditional logit decreases with $d$ under this
evaluation design, although the worst-key logit can remain bounded away
from zero.  A single fixed nonzero edge weight eventually violates the
radius condition, even though its conditional logits remain uniformly
bounded.  The theorem does not cover general fixed-strength sparse
interactions.  This is a local restriction on the student, not on the
arbitrary teacher table.  Vanishing excess risk approaches the selected
projection optimum; it need not eliminate the total teacher--student
disagreement.  The matching guarantee covers the stated budget window,
whose lower endpoint is of order $d^3$, rather than all query budgets.

\subsection{Supporting comparisons}

Two simpler comparisons clarify why observation protocol and comparator
must be kept separate.  The first is an unstructured coverage barrier;
the second is exact identification of an already compatible teacher.
Neither comparison measures the price of incompatibility by subtracting
risks from different protocols.

\subsubsection{Unstructured coverage}
\label{sec:unstructured-barrier}

This comparison illustrates the coverage cost of an unstructured family.
Let $X=(Y,Z_1,\ldots,Z_{d-1})\in\{0,1\}^d$, query only the full conditional of
$Y$, and evaluate risk uniformly over the $N_{\rm ctx}=2^{d-1}$ contexts of
$Z$.  For each sign vector $\omega\in\{-1,+1\}^{N_{\rm ctx}}$ and
$0<\gamma<1/2$, consider the
compatible teacher family
\begin{equation}
  \Pr_\omega(Y=1\mid Z=z)=\frac12+\gamma\omega_z.
  \label{eq:unstructured-hard-family}
\end{equation}
Each member is induced by a valid joint distribution, so the irreducible
incompatibility term is zero in this subproblem.

\begin{proposition}[Unstructured coverage barrier]
\label{thm:unstructured-barrier}
For the exact full-conditional oracle in
Eq.~\eqref{eq:unstructured-hard-family}, every randomized, possibly adaptive
algorithm using at most $B$ queries satisfies
\begin{equation}
  \sup_\omega\mathbb E R_\mu(\widehat q_B;\mathcal O_\omega)
  \ge
  \left(1-\frac{B}{N_{\rm ctx}}\right)_+\kappa(\gamma)
  \ge
  2\gamma^2\left(1-\frac{B}{2^{d-1}}\right)_+,
  \label{eq:unstructured-barrier}
\end{equation}
where
\(
\kappa(\gamma)=D_{\mathrm{KL}}(
\operatorname{Ber}(1/2+\gamma)\|\operatorname{Ber}(1/2))
\) and $x_+:=\max\{x,0\}$.
In particular, risk at most $\gamma^2$ requires $B\ge2^{d-2}$.
\end{proposition}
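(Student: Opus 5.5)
The plan is a Yao-type averaging argument over a uniformly random sign table, followed by a per-context lower bound on the conditional risk at every context the transcript never revealed.

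\textbf{Step 1 (randomize the teacher).} By Yao's principle it suffices to lower-bound the Bayes risk of an arbitrary deterministic, possibly adaptive, algorithm against a fixed prior. I take $\omega$ uniform on $\{-1,+1\}^{N_{\rm ctx}}$. Randomized algorithms are mixtures of deterministic ones, and the supremum over $\omega$ dominates the prior average.

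\textbf{Step 2 (decompose the risk).} Since $\mu$ is uniform over contexts, the risk splits as
\[
R_\mu(\widehat q_B;\mathcal O_\omega)=N_{\rm ctx}^{-1}\sum_z D_{\mathrm{KL}}\bigl(\operatorname{Ber}(1/2+\gamma\omega_z)\,\|\,\operatorname{Ber}(\widehat\pi_z)\bigr),
\qquad \widehat\pi_z:=\widehat q_B(Y=1\mid Z=z).
\]
Queries of other variables' conditionals are not allowed in this subproblem. If that were permitted, I would note that the teacher joint can be fixed so that those responses carry no information about unqueried $\omega_z$; the clean version restricts queries to $Y$-conditionals, as stated.

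\textbf{Step 3 (unqueried contexts are fair coins).} Responses are exact and deterministic, so a query at $z$ reveals only $\omega_z$. Conditional on the transcript, which may be adaptive, any context $z$ not yet queried still has $\omega_z$ uniform on $\{\pm1\}$ and independent of the output $\widehat\pi_z$. This holds because the output is a function of the transcript, and the transcript depends only on the queried coordinates. At most $B$ contexts are queried, so at least $(N_{\rm ctx}-B)_+$ contexts are unqueried.

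\textbf{Step 4 (per-context bound).} For an unqueried $z$, the expected contribution is at least
\[
\min_{\pi\in(0,1)}\tfrac12\bigl[D_{\mathrm{KL}}(\operatorname{Ber}(1/2+\gamma)\|\operatorname{Ber}(\pi))+D_{\mathrm{KL}}(\operatorname{Ber}(1/2-\gamma)\|\operatorname{Ber}(\pi))\bigr].
\]
The objective equals $-H(\operatorname{Ber}(1/2\pm\gamma))+\text{cross-entropy}$. The averaged cross-entropy is the cross-entropy of the mixture $\operatorname{Ber}(1/2)$ against $\operatorname{Ber}(\pi)$, which is minimized at $\pi=1/2$. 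The minimum value is therefore $\log 2-H(1/2+\gamma)=\kappa(\gamma)$.

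Summing over contexts and dividing by $N_{\rm ctx}$ gives the first inequality.

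\textbf{Step 5 (numerics).} The second inequality is Pinsker's inequality, $\kappa(\gamma)\ge 2\gamma^2$, since the total variation distance between $\operatorname{Ber}(1/2+\gamma)$ and $\operatorname{Ber}(1/2)$ is $\gamma$. For the final claim, risk at most $\gamma^2$ forces $2\gamma^2(1-B/2^{d-1})\le\gamma^2$, which gives $B\ge 2^{d-2}$.

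\textbf{Main obstacle.} The delicate step is Step 3 under adaptivity: the set of unqueried contexts is itself random. I would handle it by summing over all $z$ and using $\Pr(z\text{ unqueried})$ together with conditional independence via a chain-rule or induction argument over query rounds. I would also check that the joint-output constraint on $\widehat q_B$ does not matter, which it does not, because the bound holds for any conditional values.
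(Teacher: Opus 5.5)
Your proposal is correct and follows essentially the same route as the paper's proof: a uniform prior on the signs with Yao's principle, the fact that conditional on the transcript the unqueried signs remain independent fair coins, and a per-context Bayes loss minimized at $\widehat\pi_z=1/2$ with value $\kappa(\gamma)$ over at least $(N_{\rm ctx}-B)_+$ unqueried contexts. The remaining pieces also match the paper: Pinsker's inequality gives $\kappa(\gamma)\ge 2\gamma^2$, and the coherence constraint on the output cannot lower the bound, which the paper observes as well.
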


This is a standard independent-sign coverage argument using Yao's
principle~\cite{yao1977probabilistic}, not a universal oracle lower bound.
Appendix~\ref{app:unstructured-proof} checks compatibility and the
unqueried-context Bayes loss for this concrete family.

\subsubsection{Compatible exact-response rank boundary}
\label{sec:rank-boundary}

For a compatible teacher $p_\theta(z)=\sigma(a(z)^\top\theta)$, taking an
exact response's logit supplies one linear measurement of $m$ parameters.
This equation-solving mechanism is established in logistic model
extraction~\cite[Section~4.1.1]{tramer2016stealing}.  Here uniform full-key
evaluation and output in $\Theta_R$ give the adaptive risk lower bound
\begin{equation}
  \inf_{\mathcal A_{\rm ad}:\,B\text{ exact calls}}
  \sup_{\theta\in\Theta_R}
  \mathbb E R_{\mu_{\rm unif}}
  (q_{\widehat\theta};\mathcal O_\theta)
  \ge\frac{\kappa_0R^2}{2d}\frac{(m-B)_+}{m+2},
  \label{eq:compatible-rank}
\end{equation}
where $\kappa_0=\sigma(M_0)(1-\sigma(M_0))$ and
$M_0>0$ is a fixed logit envelope satisfying $R\sqrt d\le M_0$; hence
$0<\kappa_0\le1/4$.
Appendix~\ref{app:rank-proof} verifies the uniform-ball residual uncertainty,
the curvature conversion, and the spanning property of our legal rows.
Consequently $m$ linearly independent exact queries recover a compatible
teacher.  That identification boundary is distinct from passive projection
of an arbitrary frozen table; the cited model-extraction result is not
being used as a proof of Eq.~\eqref{eq:compatible-rank}.

A fixed-design high-probability upper bound and a residual approximation
bound are retained in Appendix~\ref{app:supplementary-studies}.  They are
supporting inequalities, not additional main contributions.
\Needspace{10\baselineskip}
\section{Hard-Marginal Reconciliation}
\label{sec:method}

Having characterized finite-access precision for one student class, we
turn to directional fidelity: what does a chosen coherent projection fit
well?  We use a computable small joint table whose direct marginals are
fixed to black-box predictions.  Varying the weight on the two conditional
directions lets us examine whether improvement in the fitted objective
transfers to the other direction.  This is a complementary instance of
the conditional-risk framework, with a different output class and
observation design; it is not an implementation or empirical test of
Theorem~\ref{thm:passive-minimax}.

Let the two categorical targets be $A\in\mathcal A$ and
$B\in\mathcal B$, with $n_A:=|\mathcal A|$ and
$n_B:=|\mathcal B|$.  For each test covariate vector $x$, four black-box
prediction tasks provide the two direct marginals and both conditional
directions.  Fix $x$ and write $q_{ab}:=q_x(A=a,B=b)$ and
$\mathbf q:=\operatorname{vec}((q_{ab})_{a,b})$.  Define the two uniformly
weighted directional risks
\begin{align}
 R_{A\mid B,x}(q)
 &:=\frac{1}{n_B}\sum_b
 D_{\mathrm{KL}}\!\left(p_{A\mid B}(\cdot\mid b,x)
 \middle\|q_{A\mid B}(\cdot\mid b,x)\right),\notag\\
 R_{B\mid A,x}(q)
 &:=\frac{1}{n_A}\sum_a
 D_{\mathrm{KL}}\!\left(p_{B\mid A}(\cdot\mid a,x)
 \middle\|q_{B\mid A}(\cdot\mid a,x)\right),
 \label{eq:tfm-directional-risks}
\end{align}
and, for $\omega\in[0,1]$,
\begin{equation}
 R_x^{(\omega)}(q)
 :=\omega R_{A\mid B,x}(q)+(1-\omega)R_{B\mid A,x}(q).
 \label{eq:tfm-weighted-risk}
\end{equation}
The confirmatory direction-balanced estimand is
\begin{equation}
 R_{\mu_x^{\rm TFM}}(q_x;\mathcal O_x):=R_x^{(1/2)}(q_x).
 \label{eq:tfm-main-risk-specialization}
\end{equation}
The extension additionally uses $\omega=1$ for an $A\mid B$-only objective
and $\omega=0$ for a $B\mid A$-only objective.  The opposite conditional
direction is not included in either endpoint objective.

The direct predictions $p_A(\cdot\mid x)$ and $p_B(\cdot\mid x)$ constrain
$q_x$ and are not extra risk terms.  On the fixed-marginal feasible set,
$q_{A\mid B}(a\mid b,x)=q_{ab}/p_B(b\mid x)$ and
$q_{B\mid A}(b\mid a,x)=q_{ab}/p_A(a\mid x)$.  Hence, up to a constant that
does not depend on $q$,
\begin{equation}
 \begin{aligned}
 R_x^{(\omega)}(q)
 &=K_x^{(\omega)}-\sum_{a,b}c_{ab}^{(\omega)}\log q_{ab},\\
 c_{ab}^{(\omega)}
 &=\frac{\omega}{n_B}p_{A\mid B}(a\mid b,x)
 +\frac{1-\omega}{n_A}p_{B\mid A}(b\mid a,x).
 \end{aligned}
 \label{eq:tfm-coefficients}
\end{equation}
All reported probabilities are floored at $10^{-9}$ and renormalized once
before risk evaluation.  Let $\mathsf M$ be the full-row-rank marginalization
matrix retaining every row equation and all but the last column equation, and
let $\mathbf r$ contain the corresponding direct marginals.  With
$\underline q_{ab}:=10^{-8}p_A(a\mid x)p_B(b\mid x)$, define the feasible set
by
\[
 \mathcal P_x
 :=\left\{\mathbf q:
 \mathsf M\mathbf q=\mathbf r,\ 
 \mathbf q\ge\underline{\mathbf q}\right\}.
\]
The direction-weighted objective and its optimizer are
\begin{equation}
 \begin{aligned}
 \mathcal J_x^{(\omega)}(\mathbf q)
 &:= -\sum_{a,b}c_{ab}^{(\omega)}\log q_{ab},\\
 \mathbf q_x^{\star,(\omega)}
 &:=\operatorname*{arg\,min}_{\mathbf q\in\mathcal P_x}
 \mathcal J_x^{(\omega)}(\mathbf q).
 \end{aligned}
 \label{eq:tfm-transport}
\end{equation}
The independent joint
$q_x^{\rm ind}(a,b)=p_A(a\mid x)p_B(b\mid x)$ is strictly feasible.

Compactness, strict convexity, and standard convex duality
\cite[Chapter~5]{boyd2004convex} give a unique optimum and a primal--dual
suboptimality bound.  The Frank--Wolfe gap is another standard certificate
\cite[Section~2]{jaggi2013revisiting}, valid for exactly feasible points
regardless of which optimizer produced them.  Proposition~\ref{prop:tfm-transport}
in Appendix~\ref{app:transport-proof} collects the cell response, both
certificates, and the checks specific to our constraints.  These are
applications of established optimality tools, not new general algorithms.
Since the independent joint is feasible,
\begin{equation}
 R_x^{(\omega)}(q_x^{\star,(\omega)})
 \le R_x^{(\omega)}(q_x^{\rm ind}).
 \label{eq:tfm-optimized-noninferiority}
\end{equation}
This ordering concerns only the optimized objective.  When the two frozen
conditional directions are incompatible, it gives no ordering for the
direction omitted at $\omega=0$ or $1$.  Thus the observed magnitude and
certificate coverage are empirical findings, whereas the sign of exact
optimized-objective non-inferiority is structural.

We solve Eq.~\eqref{eq:tfm-transport} in dual coordinates with a damped
semismooth Newton method, following the established nonsmooth Newton
framework~\cite{qi1993nonsmooth}.  Every returned floating-point solution is checked
independently at registered tolerances for positivity, normalization,
marginal feasibility, the lower bound, a stable Fenchel primal--dual gap, and
a projected-gradient Frank--Wolfe gap.  Solver termination alone is not
accepted as a certificate, and the checks remain numerical rather than proofs
that stored floating-point rows are exactly feasible.  No general convergence
rate for the Newton implementation is claimed.

The relation to classical probability fitting is discussed in
Section~\ref{sec:related-work}.  Here the variable-dependent objective is
$-\sum c_{ab}^{(\omega)}\log q_{ab}$, equivalently the $q$-dependent part of
$D_{\rm KL}(c^{(\omega)}\|q)$.  It is not a regularized transport objective
with the reverse KL direction.
\section{Frozen TFM Case Study}
\label{sec:experiments}
\label{sec:tfm-validation}

The case study examines directional fidelity under the hard-marginal
projection in Section~\ref{sec:method}: does optimizing one conditional
direction improve the direction omitted from its objective?  Independent
numerical checks assess the computed solutions across the registered units.
A TabPFN-v2~\cite{hollmann2025tabpfn}/OpenML Car
split~\cite{vanschoren2013openml} is the confirmatory anchor.  A separately
frozen extension uses two additional categorical datasets and three splits
per dataset.  Together, these layers assess the formulation's numerical
reliability and directional consequences, rather than estimate the passive
minimax rate.

Across both layers, all four prediction tasks use the same TabPFN-v2
checkpoint through \texttt{tabpfn} 8.1.0, with
\texttt{n\_estimators=1}, \texttt{low\_memory} fit mode, and
\texttt{float32} inference.  Probability vectors are frozen before
optimization; reconciliation neither refits nor updates the TFM.

\paragraph{AI-assisted research workflow}
OpenAI Codex (OpenAI), using GPT-5.6 Sol (model identifier:
\texttt{gpt-5.6-sol}), assisted in preparing and revising research scripts,
zero-model-output preflight checks, offline verifiers, and tests.  Formal runs
followed frozen protocols with author authorization.  Dataset and checkpoint
choices and protocol changes remained under author control.  Reported values
came from registered computational outputs checked by protocol-defined
verifiers, not generated textual answers.  The authors reviewed and edited
AI-assisted material, checked the results, and take responsibility for the
research decisions and conclusions.

\subsection{Confirmatory Car anchor}

The method was reached through three non-interchangeable registered stages.
T0 reduced conditional risk without preserving the directly predicted
marginals.  T1 imposed exact marginals, but a generic SLSQP implementation
passed the main numerical gates on only $8/64$ rows.  T2 changed the numerical
method, not the estimand or scientific thresholds, and used a new split that
excluded all T0/T1 test positions.  The full failure sequence and the T2
row-wise distribution are retained in Appendix~\ref{app:tfm-development-sequence}.

The T2 split used OpenML Car dataset 40975, version 3~\cite{dataset_car}, containing 1,728
complete rows.  Target $A=\texttt{class}$ has four classes, target
$B=\texttt{safety}$ has three classes, and the five covariates are
\texttt{buying}, \texttt{maint}, \texttt{doors}, \texttt{persons}, and
\texttt{lug\_boot}.  With random state 20260924, the disjoint confirmatory
split contains 1,024 training rows and 64 test rows.  All 64 transport
problems passed the independent numerical checks.  The mean
direction-balanced conditional risk decreased from $0.5040$ for the
independent joint to $0.03992$ for the anchored joint, an aggregate relative
reduction of $92.08\%$ against the preregistered $20\%$ threshold.  Direct
and reconciled marginal accuracies were identical, as expected because the
direct marginals were fixed.
Table~\ref{tab:tfm-confirmatory} summarizes the confirmatory results.

\begin{table}[htbp]
  \centering
  \small
  \caption{Confirmatory hard-marginal reconciliation on one new
  TabPFN-v2/Car split.  Both joints are assessed against the same four oracle
  predictions used in reconciliation; the independent baseline is the
  product of the direct marginals.  Certificate checks are independent and
  numerical, at registered tolerances.}
  \label{tab:tfm-confirmatory}
  \begin{tabular}{p{0.57\linewidth}r}
    \toprule
    Measure & Recorded value \\
    \midrule
    Independent-joint mean conditional risk & 0.5040015 \\
    Reconciled-joint mean conditional risk & 0.0399169 \\
    Aggregate relative conditional-risk reduction & $92.08\%$ \\
    Rows passing independent numerical checks & $64/64$ \\
    Maximum direct-marginal probability change & $9.09\times10^{-14}$ \\
    \bottomrule
  \end{tabular}
\end{table}

\subsection{Frozen external-validity extension}

The extension retains the same TabPFN-v2 checkpoint, hard-marginal solver,
lower bound, and certificate tolerances.  It uses OpenML Mushroom (ID 24,
version 1)~\cite{dataset_mushroom} and Bank Marketing (ID 1461,
version 1)~\cite{dataset_bank_marketing}, with three fixed splits
per dataset, 1,024 training rows and 64 test rows per split.  The three test
sets within each dataset are disjoint; training sets may overlap but exclude
all three registered test sets.  Dataset eligibility, target identities,
class order, and split positions were frozen by zero-model-output screening
before any extension model fit or prediction.  Four TabPFN tasks per unit
produce the two direct marginals and the two conditional families, for 24
fits and 384 test rows in total.

After the registered complete-case rule, Mushroom contains 5,644 rows;
target $A=\texttt{class}$ has two classes, target
$B=\texttt{cap-color}$ has eight classes, and the remaining 21 variables are
covariates.  Bank Marketing contains 45,211 rows; target
$A=\texttt{Class}$ has two classes, target $B=\texttt{V9}$ (contact
communication type) has three classes, and the remaining 15 variables are
covariates.

For a unit $u$ with test covariates $x_1,\ldots,x_{64}$, the bidirectional
gain and its aggregate relative reduction are
\begin{align}
 \widehat G_u^{\rm bi}
 &:=\frac{1}{64}\sum_{i=1}^{64}
 \left[R_{x_i}^{(1/2)}(q_{x_i}^{\rm ind})
      -R_{x_i}^{(1/2)}(q_{x_i}^{(1/2)})\right],\notag\\
 \widehat\rho_u^{\rm bi}
 &:=\frac{\widehat G_u^{\rm bi}}
 {\frac{1}{64}\sum_{i=1}^{64}R_{x_i}^{(1/2)}(q_{x_i}^{\rm ind})}.
 \label{eq:tfm-bidirectional-endpoints}
\end{align}
The two direction-held-out contrasts are
\begin{align}
 \widehat H_{A\mid B\rightarrow B\mid A,u}
 &:=\frac{1}{64}\sum_{i=1}^{64}
 \left[R_{B\mid A,x_i}(q_{x_i}^{(1)})
      -R_{B\mid A,x_i}(q_{x_i}^{\rm ind})\right],\notag\\
 \widehat H_{B\mid A\rightarrow A\mid B,u}
 &:=\frac{1}{64}\sum_{i=1}^{64}
 \left[R_{A\mid B,x_i}(q_{x_i}^{(0)})
      -R_{A\mid B,x_i}(q_{x_i}^{\rm ind})\right].
 \label{eq:tfm-heldout-endpoints}
\end{align}
Here ``direction-held-out'' means that the evaluated opposite-direction
conditional prediction was omitted from the optimization objective for the
same test row.  It does not mean evaluation against new labels, a new model,
or a population of datasets.  Negative $\widehat H$ favors one-direction
reconciliation; positive $\widehat H$ favors the independent joint.

The dataset--split unit, not an individual row or solver call, is the primary
generalization unit.  We report all six unit estimates.  Each interval is a
two-sided percentile interval from 10,000 paired-row bootstrap resamples
within that unit.  It quantifies within-unit row variation and is not a
population interval over datasets or splits.  Three-split dataset means are
descriptive; no cross-dataset population test is performed.

\subsection{Extension results}

All six units passed their numerical status gate.  Every one-direction and
bidirectional method passed on every row: $192/192$ certificates per unit and
$1{,}152/1{,}152$ overall.  Table~\ref{tab:tfm-external-summary} reports both
absolute and relative bidirectional reductions.  The absolute gains are much
larger on Mushroom than on Bank Marketing, so the percentages should not be
read without their risk scales.

\begin{table}[!htbp]
  \centering
  \footnotesize
  \setlength{\tabcolsep}{4pt}
  \caption{Preregistered extension: all $192/192$ instances pass independent
  numerical checks in each unit ($64$ rows, three objectives), for
  $1{,}152/1{,}152$ overall.  Gain is independent-joint minus bidirectional
  risk.  Brackets give within-unit paired-row bootstrap 95\% intervals;
  relative reduction uses the aggregate mean baseline risk.}
  \label{tab:tfm-external-summary}
  \begin{tabular}{@{}lrrrlr@{}}
    \toprule
    Dataset / split & $\overline R_{\rm ind}$ & $\overline R_{\rm bi}$ &
    $\widehat G^{\rm bi}$ & 95\% interval & Reduction \\
    \midrule
    Mushroom / 0 & 0.203929 & 0.004938 & 0.198991 & [0.143780, 0.260780] & 97.58\% \\
    Mushroom / 1 & 0.712894 & 0.009236 & 0.703658 & [0.531375, 0.881553] & 98.70\% \\
    Mushroom / 2 & 0.588489 & 0.014333 & 0.574155 & [0.425745, 0.732887] & 97.56\% \\
    \addlinespace
    Bank Marketing / 0 & 0.013765 & 0.006852 & 0.006913 & [0.002849, 0.012826] & 50.22\% \\
    Bank Marketing / 1 & 0.014419 & 0.006153 & 0.008266 & [0.004651, 0.013017] & 57.33\% \\
    Bank Marketing / 2 & 0.016563 & 0.005444 & 0.011119 & [0.004792, 0.019215] & 67.13\% \\
    \bottomrule
  \end{tabular}
\end{table}

Figure~\ref{fig:tfm-direction-holdout} shows the two direction-held-out
contrasts.  Optimizing $A\mid B$ improves the omitted $B\mid A$ direction on
all three Mushroom splits, with intervals below zero, but the three Bank
Marketing intervals cross zero.  In the reverse analysis, optimizing
$B\mid A$ worsens the omitted $A\mid B$ direction on average in all six
units, with every interval above zero.

\begin{figure}[!htb]
  \centering
  \includegraphics[width=\linewidth]{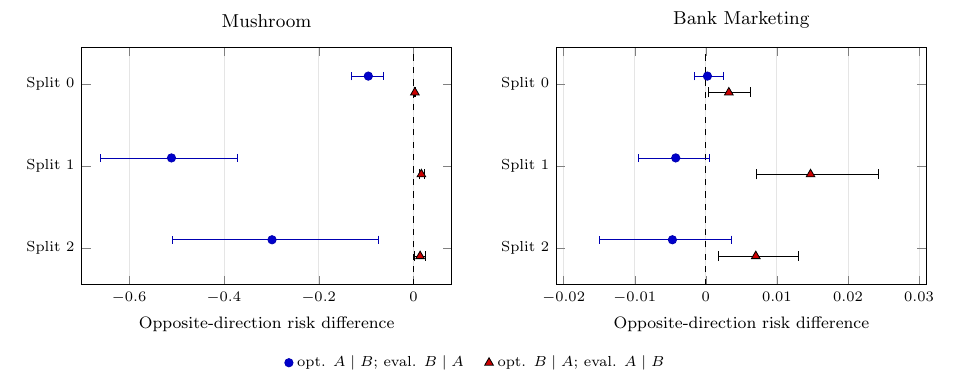}
  \caption{Direction-held-out risk differences relative to the independent
  joint; negative values favor reconciliation on the omitted direction.
  Points average 64 paired rows; bars are 95\% percentile intervals from
  10,000 within-unit paired bootstrap resamples.  Intervals describe row
  variation, not population uncertainty over datasets or splits.
  Panel scales differ.}
  \label{fig:tfm-direction-holdout}
\end{figure}
\FloatBarrier

\subsection{Interpretation}

The direction-held-out analysis distinguishes an accurately solved
projection objective from improved fit in every conditional direction.
On the registered units, the independent checks support the computed
solutions, yet the opposite-direction contrasts depend on both the dataset
and which direction was optimized.  This complements the theory's
finite-access question: approaching a selected projection optimum and
obtaining improvement outside its fitted objective are different goals.
The interpretation remains conditional on the frozen checkpoint, selected
units, and within-unit uncertainty assessment;
Section~\ref{sec:limitations} collects the resulting scope limits.
\section{Discussion and Scope}
\label{sec:limitations}

Finite-access precision and directional fidelity are distinct aspects of
coherent reconciliation.  The theory quantifies the excess risk of
estimating a selected projection from partial access; the case study
examines whether fitting one direction also improves another under fixed
marginals.  Their common conditional-risk framework separates accurate
approximation of an optimum from the consequences of choosing that
optimum.  The output classes and observation protocols differ, so the
case study complements rather than empirically validates the minimax rate.

\paragraph{Statistical and computational scope}
The matching rate allows arbitrary deterministic binary teacher tables,
including probabilities zero and one, under uniform passive-iid exact-table
slots.  Its local bounded proper pairwise student class and budget window
remain substantive restrictions.  The radius
restriction yields the decreasing average signal in
Eq.~\eqref{eq:student-signal-scope}; the result is not a general
fixed-strength high-dimensional reconstruction theorem.  It does not
establish a new rate shape, the cost of teacher incompatibility, an
advantage over directed noncoherent estimation, or a joint-KL/TV guarantee.
Approaching the same-class optimum reduces excess risk, not necessarily
the incompatibility or approximation floors.  The known population design
term in the upper-bound estimator also prevents interpreting query
optimality as a polynomial-time computational guarantee.

\paragraph{Meaning of the empirical findings}
The hard-marginal case study provides complete numerical check coverage
within one checkpoint and the registered units.  Feasibility of the
independent baseline implies exact optimized-objective non-inferiority;
the observed gain magnitude and opposite-direction contrasts are the
empirical questions.  Direction-held-out means omission from a row's
optimization objective, not prediction against new labels.  Intervals
describe paired row variation within a unit, not uncertainty over a
population of datasets or models.  Numerical checks at registered
tolerances are not exact-feasibility proofs for stored floating-point rows.
No predictive-accuracy or general-TFM reliability conclusion follows.

\paragraph{Separate auxiliary evidence and future questions}
The supplementary response-adaptive construction has a non-joint output
and a finite specified comparator class.  Its public-data weighting study
fails the all-dataset gate because Mushroom reverses sign
(Appendix~\ref{app:supplementary-studies}); the failure is retained, not
pooled with the real-TFM case study.  It does not supply an active coherent
query theorem.  Joint-level finite-query precision, or the cost of coherent
rather than directed output under one common observation model, remain
distinct open questions.  Answering either would require additional
theory, not extrapolation of the current rate.
\Needspace{9\baselineskip}
\section*{Conclusion}
\label{sec:conclusion}

We characterize finite-access precision and examine directional fidelity
in coherent projection of black-box conditional predictions.  For arbitrary
deterministic binary teacher tables, matching expected excess-risk bounds
identify the budget, dimension, and radius dependence of the best local
bounded pairwise projection under uniform passive exact-table queries.
The characterization retains its stated radius and budget window.

The fixed-marginal TabPFN-v2 case study examines the complementary question
of what a selected projection fits well.  Independent numerical checks
support the computed solutions, while the asymmetric directional contrasts
show that improvement in a fitted objective need not extend to the omitted
direction.  Together, the results separate estimating a projection
accurately from improving every conditional direction.  Joint-level
precision, coherent active design, and guarantees of predictive-accuracy
improvement remain questions for further study.

\section*{Data availability}
The original datasets used in this study are publicly available through
OpenML~\cite{vanschoren2013openml}.  The case study uses
Car~\cite{dataset_car}, Mushroom~\cite{dataset_mushroom}, and
Bank Marketing~\cite{dataset_bank_marketing}.  The semisynthetic study
also uses Letter Recognition~\cite{dataset_letter}, the morphological
feature subset of Multiple Features~\cite{dataset_mfeat},
Nursery~\cite{dataset_nursery}, and Page Blocks
Classification~\cite{dataset_page_blocks}.  The dataset references provide
original-source DOIs and links to the OpenML records, with the versions
actually used.  The case-study data handling and split design are described
in Section~\ref{sec:experiments}; the semisynthetic use of public data is
described in Appendix~\ref{sec:semisynthetic-experiment}.

\section*{Declaration of generative AI and AI-assisted technologies in the
manuscript preparation process}
During the preparation of this work, the authors used OpenAI Codex (OpenAI),
using GPT-5.6 Sol (model identifier: \texttt{gpt-5.6-sol}), to assist with
manuscript organization, language and clarity revision, LaTeX preparation,
consistency checks, and the preparation and revision of research scripts and
protocol-verification artifacts, including preflight, offline verification,
and tests.  This supported the research workflow, not development of a
standalone software product.  The tool was not a source of experimental
observations; it did not independently select datasets, alter frozen
protocols or checkpoints, authorize executions, or replace stored outputs.
Reported values were derived from frozen execution artifacts and checked
by protocol-defined verifiers.  The authors reviewed and edited the
AI-assisted output as needed, verified the final manuscript and reported
results against the frozen source files and protocol-defined checks, and take
full responsibility for the content of the published article.

\bibliographystyle{elsarticle-num}


\appendix
\renewcommand{\thesection}{\Alph{section}}
\makeatletter
\let\manuscript@seccntformat\@seccntformat
\renewcommand{\@seccntformat}[1]{%
  \ifstrequal{#1}{section}{\appendixname~}{}%
  \manuscript@seccntformat{#1}}
\makeatother
\numberwithin{figure}{section}
\numberwithin{table}{section}
\renewcommand{\thefigure}{\Alph{section}.\arabic{figure}}
\renewcommand{\thetable}{\Alph{section}.\arabic{table}}
\section{A passive exact-table minimax characterization}
\label{sec:minimax-theory}

We isolate a setting in which the statistical cost of projecting an arbitrary
conditional table onto a coherent pairwise model can be characterized up to
constants.  The result is deliberately protocol-specific: it concerns a
uniform passive design with exact deterministic table responses.  It is not an
active-query or computational minimax theorem.

\subsection{Model and risk}

Let $X\in\{-1,+1\}^d$.  A query key is
$z=(j,x_{-j})$, where $j\in[d]$ is the target coordinate and
$x_{-j}\in\{-1,+1\}^{d-1}$ is the context.  The set of all keys is
$\mathcal Z$, with
\[
  N:=|\mathcal Z|=d2^{d-1},
\]
and $\mu$ denotes the uniform distribution on $\mathcal Z$.

The coherent pairwise student has
\[
  m=d+\frac{d(d-1)}2
\]
parameters: one intercept per target and one shared parameter per unordered
pair.  We encode its conditional logit as $a(z)^\top\theta$, where the feature
row $a(z)\in\mathbb R^m$ has the target intercept equal to one, the
$d-1$ edges incident to the target equal to the corresponding context signs,
and every other coordinate equal to zero.  Thus
\[
  q_\theta(X_j=+1\mid X_{-j}=x_{-j})
  =\sigma(a(z)^\top\theta),
  \qquad
  \Theta_R:=\{\theta:\|\theta\|_2\le R\},
\]
where $\sigma(u)=(1+e^{-u})^{-1}$.

The teacher is an arbitrary deterministic table
\[
  p:\mathcal Z\longrightarrow[0,1].
\]
Endpoint values use the KL convention from Section~\ref{sec:problem};
the student probabilities are strictly positive.
For $B$ passive design slots, the learner observes
\[
  Z_1,\ldots,Z_B\overset{\mathrm{i.i.d.}}\sim\mu,
  \qquad p(Z_1),\ldots,p(Z_B).
\]
If the same key appears more than once, its response is the same frozen table
entry.  In particular, a collision is not a fresh noisy label.  The population
risk is
\[
  \mathcal R(\theta;p)
  :=\mathbb E_{Z\sim\mu}
  D_{\mathrm{KL}}
  \left(
    \operatorname{Ber}(p(Z))
    \,\middle\|\,
    \operatorname{Ber}(\sigma(a(Z)^\top\theta))
  \right).
\]
We write
\begin{equation}
  \mathfrak M_{B,\mathrm{iid}}^{\mathrm{agn}}(d,R)
  :=\inf_{\mathcal A}\sup_{p\in\mathcal P_{\rm all}}
  \mathbb E\left[
    \mathcal R(\widehat\theta;p)
    -\inf_{\theta\in\Theta_R}\mathcal R(\theta;p)
  \right],
  \label{eq:minimax-definition}
\end{equation}
where the infimum ranges over randomized learners under the preceding passive
protocol whose output $\widehat\theta$ belongs to $\Theta_R$ almost surely,
the supremum ranges over the deterministic teacher tables specified above,
and the expectation is over the design and learner randomization.

Let $A(u)=\log(1+e^u)$ and define
\[
  G(\theta):=\mathbb E_\mu A(a(Z)^\top\theta),
  \qquad
  b_p:=\mathbb E_\mu[p(Z)a(Z)].
\]
Up to a term $C_p$ independent of $\theta$,
\begin{equation}
  \mathcal R(\theta;p)=G(\theta)-b_p^\top\theta+C_p.
  \label{eq:risk-moment-form}
\end{equation}
Thus the teacher affects its best pairwise projection only through the
$m$-dimensional moment $b_p$.

\begin{lemma}[Uniform-design covariance and curvature]
\label{lem:covariance}
Let $r=d(d-1)/2$ be the number of edge coordinates.  Under the uniform
full-key design,
\begin{equation}
  \Sigma:=\mathbb E_\mu[a(Z)a(Z)^\top]
  =\operatorname{diag}\left(\frac1d I_d,\frac2d I_r\right).
  \label{eq:covariance}
\end{equation}
If $R\sqrt d\le M_0$ and
$\kappa_0:=\sigma(M_0)(1-\sigma(M_0))\in(0,1/4]$, then for every
$\theta\in\Theta_R$,
\begin{equation}
  \frac{\kappa_0}{d}I_m
  \preceq\nabla^2G(\theta)
  \preceq\frac1{2d}I_m.
  \label{eq:hessian-sandwich}
\end{equation}
On the edge subspace, the lower bound improves to
$(2\kappa_0/d)I_r$.
Here $U\preceq V$ is the Loewner order: $V-U$ is positive semidefinite.
\end{lemma}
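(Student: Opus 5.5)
The plan is to compute the second-moment matrix entrywise using the structure of the feature rows, and then to turn it into a Hessian sandwich through the logistic curvature $A''(u)=\sigma(u)(1-\sigma(u))$.

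\emph{Step 1: entries of $\Sigma$.} Condition on the target index $j$, which is uniform on $[d]$. Given $j$, the context $x_{-j}$ is uniform on $\{-1,+1\}^{d-1}$, so its coordinates are independent Rademacher signs. The row $a(z)$ has intercept entry $1$ at position $j$ and entry $x_k$ at each edge $\{j,k\}$.

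Diagonal entries:
\begin{itemize}
\item For the intercept $b_j$, the entry is nonzero only when the target is $j$, which gives $\mathbb E[a_{b_j}^2]=1/d$.
\item For an edge $\{j,k\}$, the entry is nonzero when the target is $j$ or $k$, and its square is then $1$. This gives $2/d$.
\end{itemize}

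Off-diagonal entries, in every case with a fixed target:
\begin{itemize}
\item Two distinct intercepts never appear together.
\item An intercept $b_j$ paired with an incident edge $\{j,k\}$ has product $x_k$, with mean $0$.
\item An intercept paired with a non-incident edge never appears together.
\item Two distinct edges that are both incident to the target $j$ have product $x_kx_l$ with $k\ne l$, with mean $0$.
\item Two distinct edges that are not both incident to the current target never appear together.
\end{itemize}
This yields \eqref{eq:covariance}.

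\emph{Step 2: uniform logit envelope.} For $\theta\in\Theta_R$ and any key $z$, note that $a(z)$ has exactly $d$ nonzero entries, each equal to $\pm1$, so $\|a(z)\|_2=\sqrt d$. Cauchy--Schwarz then gives $|a(z)^\top\theta|\le R\sqrt d\le M_0$. Because $A''$ is even and decreasing in $|u|$, we get $\kappa_0\le A''(a(z)^\top\theta)\le 1/4$ at every key.

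\emph{Step 3: sandwich.} We have $\nabla^2G(\theta)=\mathbb E_\mu[A''(a(Z)^\top\theta)\,a(Z)a(Z)^\top]$. Since each $a(Z)a(Z)^\top$ is positive semidefinite, the scalar bounds from Step 2 give
\[
\kappa_0\Sigma\preceq\nabla^2G(\theta)\preceq\tfrac14\Sigma .
\]
Inserting \eqref{eq:covariance} gives the following bounds.
\begin{itemize}
\item Lower bound: $\kappa_0\Sigma\succeq(\kappa_0/d)I_m$, since the smallest diagonal entry of $\Sigma$ is $1/d$.
\item Upper bound: $\tfrac14\Sigma\preceq\tfrac14\cdot\tfrac2d I_m=\tfrac1{2d}I_m$.
\item Edge subspace: for $v$ supported on edge coordinates, $v^\top\Sigma v=(2/d)\|v\|_2^2$, so $v^\top\nabla^2G(\theta)v\ge(2\kappa_0/d)\|v\|_2^2$.
\end{itemize}
The same range for $A''$ gives $\kappa_0\in(0,1/4]$.

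The only step that needs real care is the off-diagonal case analysis in Step 1, namely checking that the shared-edge encoding produces no cross terms. Everything else is routine.
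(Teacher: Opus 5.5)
Your proposal is correct and follows the paper's proof essentially step for step: the activity probabilities $1/d$ and $2/d$ with Rademacher cancellation of cross moments give $\Sigma$, and the envelope $|a(z)^\top\theta|\le R\sqrt d\le M_0$ puts the logistic curvature in $[\kappa_0,1/4]$, so that $\kappa_0\Sigma\preceq\nabla^2G(\theta)\preceq\Sigma/4$. Your explicit off-diagonal case analysis spells out what the paper states in one sentence ("every cross moment vanishes by the independent Rademacher context signs").
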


\begin{proof}
Each row has one nonzero intercept and $d-1$ nonzero incident-edge entries.
The intercept coordinate for target $j$ is active with probability $1/d$,
and an edge is active when either endpoint is queried, with probability
$2/d$.  Every cross moment vanishes by the independent Rademacher context
signs, proving~\eqref{eq:covariance}.  Moreover, abbreviating $a=a(Z)$
inside the expectation,
\[
  \nabla^2G(\theta)
  =\mathbb E_\mu\left[
    \sigma(a^\top\theta)(1-\sigma(a^\top\theta))aa^\top
  \right].
\]
Since $\|a(z)\|_2=\sqrt d$ and $\|\theta\|_2\le R$, the logit has magnitude
at most $M_0$.  Hence its logistic derivative lies in
$[\kappa_0,1/4]$.  Combining this with~\eqref{eq:covariance} gives the
claimed bounds.
\end{proof}

\subsection{Moment estimator: the matching upper bound}

Given the $B$ exact responses, set
\[
  \widehat b_B:=\frac1B\sum_{t=1}^B p(Z_t)a(Z_t)
\]
and output any minimizer
\begin{equation}
  \widehat\theta_B\in\arg\min_{\theta\in\Theta_R}
  \{G(\theta)-\widehat b_B^\top\theta\}.
  \label{eq:moment-estimator}
\end{equation}
The population design term $G$ is treated as known.  This is an information
or query result; it does not assert that the exponential sum defining $G$ can
be evaluated in polynomial time.

\begin{proposition}[Moment-projection upper bound]
\label{prop:moment-upper}
Under the preceding protocol, for every deterministic table
$p:\mathcal Z\to[0,1]$,
\begin{equation}
  \mathbb E\left[
    \mathcal R(\widehat\theta_B;p)
    -\inf_{\theta\in\Theta_R}\mathcal R(\theta;p)
  \right]
  \le
  \min\left\{
    2R\sqrt{\frac dB},
    \frac{d^2}{2\kappa_0B}
  \right\}.
  \label{eq:upper-bound}
\end{equation}
\end{proposition}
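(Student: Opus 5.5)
The proof reduces everything to the moment form \eqref{eq:risk-moment-form}. Write $F(\theta)=G(\theta)-b_p^\top\theta$ and $\widehat F(\theta)=G(\theta)-\widehat b_B^\top\theta$. Let $\theta^\star$ minimize $F$ over $\Theta_R$; it exists by compactness and is unique by strict convexity from Lemma~\ref{lem:covariance}. The excess risk equals $F(\widehat\theta_B)-F(\theta^\star)$. The only randomness enters through the error $\xi:=\widehat b_B-b_p$, which is an average of $B$ i.i.d.\ mean-zero vectors $p(Z_t)a(Z_t)-b_p$. Repeated keys are harmless because the draws themselves are i.i.d.\ from $\mu$ and the table is deterministic. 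The first step is the second-moment bound
\[
\mathbb E\|\xi\|_2^2\le \frac1B\mathbb E_\mu\|p(Z)a(Z)\|_2^2\le \frac{d}{B},
\]
using $\|a(z)\|_2^2=d$ and $p\in[0,1]$. A weighted version will also be useful:
\[
\mathbb E\,\xi^\top\Sigma^{-1}\xi\le \frac1B\mathbb E_\mu[a^\top\Sigma^{-1}a].
\]
By \eqref{eq:covariance} this is at most $\frac{1}{B}\bigl(d\cdot 1+(d-1)\tfrac d2\bigr)\le d^2/B$, and in fact equals $m/B$ when $p$ is replaced by $1$.

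\textbf{Slow branch.} Optimality of $\widehat\theta_B$ for $\widehat F$ gives $\widehat F(\widehat\theta_B)\le\widehat F(\theta^\star)$. Since $F-\widehat F=\xi^\top\theta$, this yields
\[
F(\widehat\theta_B)-F(\theta^\star)\le \xi^\top(\widehat\theta_B-\theta^\star)\le 2R\|\xi\|_2 .
\]
Taking expectations and applying Jensen gives $2R\sqrt{d/B}$.

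\textbf{Fast branch.} Here I use curvature. By Lemma~\ref{lem:covariance}, $F$ is $\lambda$-strongly convex on the convex set $\Theta_R$ with $\lambda=\kappa_0/d$. First-order optimality of $\theta^\star$ for $F$ gives $F(\widehat\theta_B)-F(\theta^\star)\ge\frac\lambda2\|\widehat\theta_B-\theta^\star\|^2$. Doing the same for $\widehat\theta_B$ with $\widehat F$ and adding the two inequalities yields
\[
\lambda\|\widehat\theta_B-\theta^\star\|^2\le \xi^\top(\widehat\theta_B-\theta^\star).
\]
Hence $\|\widehat\theta_B-\theta^\star\|\le\|\xi\|/\lambda$. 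To bound the excess, I use $F(\widehat\theta_B)-F(\theta^\star)\le\xi^\top(\widehat\theta_B-\theta^\star)-\frac\lambda2\|\widehat\theta_B-\theta^\star\|^2$. This follows from $\widehat F(\widehat\theta_B)+\frac\lambda2\|\theta^\star-\widehat\theta_B\|^2\le\widehat F(\theta^\star)$, the strong-convexity inequality at the constrained minimizer of $\widehat F$. Maximizing the right side over the displacement gives at most $\|\xi\|^2/(2\lambda)$, so the expectation is at most $\frac{d}{2\kappa_0}\cdot\frac dB=\frac{d^2}{2\kappa_0B}$. This is exactly the claimed constant, so the unweighted Euclidean norm suffices and the $\Sigma^{-1}$ refinement is not needed.

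Both bounds hold pointwise in $\xi$ before expectation, so the expected excess is at most the minimum of the two. The main point to verify carefully is that the strong-convexity inequalities hold with \emph{constrained} minimizers. That requires the first-order variational inequality $\langle\nabla\widehat F(\widehat\theta_B),\theta-\widehat\theta_B\rangle\ge0$ on $\Theta_R$, and it requires the Hessian lower bound $\kappa_0/d$ to hold on all of $\Theta_R$, which is precisely where $R\sqrt d\le M_0$ enters through Lemma~\ref{lem:covariance}.
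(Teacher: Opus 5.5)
Your proposal is correct and follows the paper's proof step for step. The slow branch uses the same optimality bound $\xi^\top(\widehat\theta_B-\theta^\star)\le 2R\|\xi\|_2$, combined with $\mathbb E\|\xi\|_2^2\le d/B$ and Jensen; the fast branch uses the same constrained strong-convexity inequality with $\lambda=\kappa_0/d$, which gives $\|\xi\|_2^2/(2\lambda)$. As you note yourself, the $\Sigma^{-1}$-weighted second-moment bound is not needed for the stated constants.
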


\begin{proof}
Let $b=b_p$, $e=\widehat b_B-b$, and let $\theta^*$ minimize
$G(\theta)-b^\top\theta$ on $\Theta_R$.  Optimality in
\eqref{eq:moment-estimator} gives
\[
  \mathcal R(\widehat\theta_B;p)-\mathcal R(\theta^*;p)
  \le e^\top(\widehat\theta_B-\theta^*)
  \le2R\|e\|_2.
\]
Because $\|p(Z)a(Z)\|_2^2\le d$,
\[
  \mathbb E\|e\|_2^2
  =\frac1B\mathbb E\|p(Z)a(Z)-b\|_2^2
  \le\frac dB.
\]
Jensen's inequality gives the first term in~\eqref{eq:upper-bound}.

By Lemma~\ref{lem:covariance}, the perturbed objective in
\eqref{eq:moment-estimator} is $\lambda$-strongly convex on $\Theta_R$ with
$\lambda=\kappa_0/d$.  Constrained optimality and strong convexity give
\[
  \mathcal R(\widehat\theta_B;p)-\mathcal R(\theta^*;p)
  \le
  e^\top(\widehat\theta_B-\theta^*)
  -\frac\lambda2\|\widehat\theta_B-\theta^*\|_2^2
  \le\frac{\|e\|_2^2}{2\lambda}.
\]
Taking expectations gives the second term.
\end{proof}

\subsection{Matching lower bound}

The full statement of Theorem~\ref{thm:passive-minimax} appears in
Section~\ref{sec:passive-protocol}.  Proposition~\ref{prop:moment-upper}
already bounds the risk uniformly over $\mathcal P_{\rm all}$.  To prove
the lower bound, fix $\gamma:=1/4$ throughout the construction below and
restrict to
\[
 \mathcal P_{1/4}
 :=\{p:\mathcal Z\to[1/4,3/4]\}
 \subset\mathcal P_{\rm all}.
\]
With the same learner class and observation protocol, enlarging the teacher
class cannot decrease the minimax risk: for every learner its worst-case
excess risk over $\mathcal P_{\rm all}$ is at least that over
$\mathcal P_{1/4}$, and taking the infimum preserves the inequality.
The two-valued tables below lie in this subclass.  Hence their lower bound,
together with the uniform upper bound, proves the full-class statement
without changing the student radius or budget window.  Since $\gamma$
is fixed, the theorem constants depend only on $M_0$.

The same construction also retains the restricted-class result: for any
fixed $\gamma\in(0,1/4]$, replacing the fixed amplitude by $\gamma$ gives
matching bounds over
$\mathcal P_\gamma:=\{p:\mathcal Z\to[1/2-\gamma,1/2+\gamma]\}$,
with constants depending on $M_0$ and $\gamma$.  The full-class theorem
needs only the fixed choice $\gamma=1/4$.

\begin{proof}
The upper bound is Proposition~\ref{prop:moment-upper}.  We prove the
full-class lower bound using the preceding fixed-amplitude subclass in
seven steps.

\paragraph{Step 1: Edge code and frozen deterministic tables}
Restrict the hard family to the $r=d(d-1)/2$ edge coordinates and set every
intercept perturbation to zero.  A Varshamov--Gilbert
code~\cite{tsybakov2009introduction} gives
$\Omega\subset\{-1,+1\}^r$ such that
\[
  \log|\Omega|\ge c_{\mathrm{VG}}r,
  \qquad
  d_{\mathrm H}(\omega,\omega')\ge r/8
  \quad(\omega\ne\omega').
\]
Draw $\omega$ uniformly from $\Omega$.  Unless stated otherwise, probabilities
in the lower-bound argument are over this codeword prior, the conditionally
drawn frozen table, the i.i.d. design slots, and learner randomization.
Let
\[
  \delta=c_1\sqrt{d/B},
  \qquad t_\omega=\delta\omega,
\]
where $c_1>0$ will be chosen sufficiently small.
Whenever $t_\omega$ is combined with a full $m$-dimensional parameter or
moment vector, it denotes its zero-intercept embedding in $\mathbb R^m$.
Let $a_E(z)$ denote the edge subvector of $a(z)$.  Conditional on $\omega$,
independently generate
one sign for every key,
\begin{equation}
  S_z\in\{-1,+1\},
  \qquad
  \mathbb E[S_z\mid\omega]
  =c_2a_E(z)^\top t_\omega,
  \label{eq:hard-sign-prior}
\end{equation}
for a fixed absolute $c_2>0$, and then freeze the exact table
\begin{equation}
  p_{\omega,S}(z)=\frac12+\gamma S_z.
  \label{eq:hard-table}
\end{equation}
Since a key activates at most $d-1$ edge coordinates and
$B\ge Cd^3$, choosing $c_1/\sqrt C$ sufficiently small ensures
$|c_2a_E(z)^\top t_\omega|\le1/2$.  Thus~\eqref{eq:hard-sign-prior} is valid,
and every realization of~\eqref{eq:hard-table} is a deterministic teacher in
$\mathcal P_{1/4}\subset\mathcal P_{\rm all}$.  The randomness is a prior
over frozen tables, not response
noise at query time.

\paragraph{Step 2: Collision-aware transcript information}
Let $P_\omega^{(B)}$ be the transcript law after mixing over $S$.  Given the
key sequence, observing the exact responses is equivalent to observing
$S_z$ once for each unique queried key.  Hence
\[
  D_{\mathrm{KL}}(P_\omega^{(B)}\|P_{\omega'}^{(B)})
  \le
  \frac{Bc_2^2}{1-\rho^2}
  \mathbb E_\mu
  [a_E(Z)^\top(t_\omega-t_{\omega'})]^2,
\]
where $\rho=1/2$ is the preceding bias envelope.  Repeated keys can only
remove terms from this upper bound.  By~\eqref{eq:covariance},
\[
  D_{\mathrm{KL}}(P_\omega^{(B)}\|P_{\omega'}^{(B)})
  \le
  \frac{2Bc_2^2}{d(1-\rho^2)}
  \|t_\omega-t_{\omega'}\|_2^2
  \le C_1c_1^2r.
\]
Choose $c_1$ so that this is at most
$\frac18\log|\Omega|$.  Fano's inequality~\cite{tsybakov2009introduction}
then implies that every transcript
decoder satisfies, for all sufficiently large $r$,
\begin{equation}
  \Pr(\widehat\omega\ne\omega)\ge\frac34.
  \label{eq:fano-error}
\end{equation}

\paragraph{Step 3: The realized table stays near its code moment}
Set
\[
  b_0:=\mathbb E_\mu\left[\frac12a(Z)\right],
  \qquad K:=\gamma c_2.
\]
For the realized table, define
$b_{\omega,S}:=b_{p_{\omega,S}}
=\mathbb E_\mu[p_{\omega,S}(Z)a(Z)]$.
The prior-mean moment shift is edge-only and equals
\begin{equation}
  h_\omega
  :=\mathbb E[b_{\omega,S}\mid\omega]-b_0
  =\frac{2K}{d}t_\omega.
  \label{eq:mean-moment-shift}
\end{equation}
Write
$e_{\omega,S}:=b_{\omega,S}-b_0-h_\omega$.  Independence across the frozen
table entries and $\|a(z)\|_2^2=d$ give
\[
  \mathbb E\|e_{\omega,S}\|_2^2\le\frac{\gamma^2d}{N}.
\]
Fix a sufficiently small $\varepsilon_0>0$.  Since
$\delta=c_1\sqrt{d/B}$ and $B\le N^{1/3}$, Markov's inequality implies,
after increasing $d_0$ if needed,
\begin{equation}
  \Pr\left(
    \|e_{\omega,S}\|_2
    \le\varepsilon_0K\kappa_0\delta
  \right)\ge\frac{63}{64}.
  \label{eq:good-table}
\end{equation}
This is the good-table event.

\paragraph{Step 4: Common ideal geometry}
Define
\[
  \Phi(\theta):=G(\theta)-G(0)-b_0^\top\theta.
\]
Then $\Phi(0)=0$, $\nabla\Phi(0)=0$, and its Hessian obeys
Lemma~\ref{lem:covariance}.  Up to a table-dependent additive constant, the
realized objective is
\begin{equation}
  F_{\omega,S}(\theta)
  =\Phi(\theta)-(h_\omega+e_{\omega,S})^\top\theta.
  \label{eq:realized-objective}
\end{equation}
The ideal objective is $F_\omega^0(\theta)=\Phi(\theta)-h_\omega^\top\theta$.

We first treat the interior regime.  Let $\theta_\omega^0$ be the unique
minimizer of $F_\omega^0$ over $\Theta_R$.  For any boundary point
$\|\theta\|_2=R$, integrating the lower Hessian bound along the segment
$s\theta$ gives
\[
  \theta^\top\nabla F_\omega^0(\theta)
  \ge\frac{\kappa_0}{d}R^2-R\|h_\omega\|_2.
\]
A constrained boundary minimizer would instead require this radial derivative
to be nonpositive.  Therefore, if $\|h_\omega\|_2<\kappa_0R/d$, the ideal
minimizer is interior.  Symmetry of the Rademacher contexts makes every
intercept objective strictly convex and stationary at zero, so
$\theta_\omega^0$ has zero intercept coordinates.  The edge-subspace lower
curvature and its first-order condition then yield
\[
  \|\theta_\omega^0\|_2
  \le\frac{d}{2\kappa_0}\|h_\omega\|_2
  =\frac K{\kappa_0}\delta\sqrt r.
\]
Consequently there is a constant $c_{\mathrm{in}}>0$ such that
\begin{equation}
  \delta d\le c_{\mathrm{in}}R
  \label{eq:interior-condition}
\end{equation}
places every ideal center inside the ball of radius $R/2$.

The upper Hessian bound gives, for distinct codewords,
\begin{align}
  \|\theta_\omega^0-\theta_{\omega'}^0\|_2
  &\ge2d\|h_\omega-h_{\omega'}\|_2 \\
  &=4K\|t_\omega-t_{\omega'}\|_2 \\
  &\ge4K\delta\sqrt{r/2}
  =:\Delta_{\mathrm{in}}.
  \label{eq:center-separation}
\end{align}
Let $\theta_{\omega,S}^*$ minimize the realized objective.  Strong convex
variational inequalities for two objectives differing by a linear term imply
\begin{equation}
  \|\theta_{\omega,S}^*-\theta_\omega^0\|_2
  \le\frac d{\kappa_0}\|e_{\omega,S}\|_2.
  \label{eq:optimizer-perturbation}
\end{equation}
On the good-table event, choosing $\varepsilon_0$ sufficiently small makes
this at most $\Delta_{\mathrm{in}}/8$.

\paragraph{Step 5: Interior decoding implies $d^2/B$ excess risk}
Decode a learner output by its nearest ideal center.  A decoding error implies
$\|\widehat\theta-\theta_\omega^0\|_2\ge\Delta_{\mathrm{in}}/2$ and, by
\eqref{eq:optimizer-perturbation},
\[
  \|\widehat\theta-\theta_{\omega,S}^*\|_2
  \ge3\Delta_{\mathrm{in}}/8
\]
on the good-table event.  Since the realized objective is
$\kappa_0/d$-strongly convex on the ball,
\begin{equation}
  F_{\omega,S}(\widehat\theta)
  -F_{\omega,S}(\theta_{\omega,S}^*)
  \ge c_2'\frac{\delta^2r}{d}
  \ge c_3'\frac{d^2}{B}.
  \label{eq:interior-risk}
\end{equation}

\paragraph{Step 6: Boundary decoding implies $R\sqrt{d/B}$ excess risk}
Let
\[
  u_\omega:=\frac{(0_d,t_\omega)}{\|t_\omega\|_2}\in\mathbb R^m,
  \qquad
  H:=\|h_\omega\|_2=\frac{2K}{d}\delta\sqrt r.
\]
Thus $H\asymp K\delta$, where the comparison constants are absolute (and in
particular do not depend on $d$, $B$, or $R$).  Code separation gives
$u_\omega^\top u_{\omega'}\le3/4$ for $\omega\ne\omega'$.  Define the decoder by
$\widehat\omega\in\operatorname*{arg\,max}_{\nu\in\Omega}
u_\nu^\top\widehat\theta$, with a fixed lexical tie rule.  If this decoder
is wrong, elementary
halfspace geometry gives
\begin{equation}
  u_\omega^\top\widehat\theta
  \le R\sqrt{\frac{1+3/4}{2}}
  =R\sqrt{7/8}.
  \label{eq:angular-loss}
\end{equation}

Convexity and $\nabla\Phi(0)=0$ imply $\Phi\ge0$, while smoothness gives
$\Phi(Ru_\omega)\le R^2/(4d)$.  Using $Ru_\omega$ as a feasible comparator
in~\eqref{eq:realized-objective}, every decoder-error output satisfies
\begin{align}
  F_{\omega,S}(\widehat\theta)
  -\min_{\theta\in\Theta_R}F_{\omega,S}(\theta)
  &\ge
  \left(1-\sqrt{7/8}\right)HR
  -\frac{R^2}{4d}
  -2R\|e_{\omega,S}\|_2.
  \label{eq:boundary-comparator}
\end{align}
Hence there is a constant $C_{\mathrm{bd}}>0$ such that, on the good-table
event,
\begin{equation}
  \delta d\ge C_{\mathrm{bd}}R
  \quad\Longrightarrow\quad
  F_{\omega,S}(\widehat\theta)
  -F_{\omega,S}(\theta_{\omega,S}^*)
  \ge c_4'R\delta
  =c_5'R\sqrt{d/B}.
  \label{eq:boundary-risk}
\end{equation}

\paragraph{Step 7: Fano, Yao, and the transition band}
For either decoder,~\eqref{eq:fano-error} and~\eqref{eq:good-table} imply
\[
  \Pr(\widehat\omega\ne\omega
  \text{ and the table is good})
  \ge\frac34-\frac1{64}>\frac12.
\]
Combining this event with~\eqref{eq:interior-risk} or
\eqref{eq:boundary-risk} yields the corresponding Bayes expected excess-risk
lower bound.  The prior is supported entirely on deterministic exact tables
in $\mathcal P_{1/4}\subset\mathcal P_{\rm all}$,
so Yao's principle~\cite{yao1977probabilistic} transfers the Bayes lower bound
to randomized learners in
the minimax problem~\eqref{eq:minimax-definition}.

It remains to connect the two sufficient regimes.  Write
$\delta_B=c_1\sqrt{d/B}$, let $U=N^{1/3}$, and enlarge
$C_{\mathrm{bd}}$ if necessary so that $C_{\mathrm{bd}}>c_{\mathrm{in}}$.
The boundary and interior conditions respectively hold for
\[
  B\le B_{\mathrm{bd}}
  :=\frac{c_1^2d^3}{C_{\mathrm{bd}}^2R^2},
  \qquad
  B\ge B_{\mathrm{in}}
  :=\frac{c_1^2d^3}{c_{\mathrm{in}}^2R^2}.
\]
These thresholds differ by the fixed factor
$(C_{\mathrm{bd}}/c_{\mathrm{in}})^2$.  If
$B_{\mathrm{in}}\le Cd^3$, the interior argument already covers the
declared window.  Otherwise set $B_*:=\lceil B_{\mathrm{in}}\rceil$.
For integer slot budgets $B\in[B_{\mathrm{bd}},B_*]$, the target
minimum is at most $(C_{\mathrm{bd}}/c_1)R^2/d$, while the interior
lower bound at $B_*$ is a constant multiple of $R^2/d$; rounding changes
only that constant for sufficiently large $d$.  Since the minimax risk is
nonincreasing in the number of passive slots, the bound at $B_*$ transfers
to every smaller budget in this transition band.

For completeness, fix any constant
$L>4(C_{\mathrm{bd}}/c_{\mathrm{in}})^2$.  If $B_*\le LU$, the
interior construction is valid at $B_*$ even if $B_*>U$: its Fano and bias
bounds require only $B_*\ge Cd^3$, and its good-table probability requires
$\sqrt{B_*/N}$ to be sufficiently small, which follows from
$B_*\le L N^{1/3}$ by increasing $d_0$.  If $B_*>LU$, then for all
sufficiently large $d$ one has $B_{\mathrm{in}}>LU-1$, hence
$\delta_Ud> C_{\mathrm{bd}}R$; the boundary argument therefore covers
every $B\le U$.  This exhausts the transition and window-endpoint cases
without changing the stated theorem window or rate, and proves the lower
bound in~\eqref{eq:matching-rate}.
\end{proof}

\begin{remark}[Scope]
Theorem~\ref{thm:passive-minimax} counts i.i.d. design slots, not guaranteed
distinct keys.  Caching a repeated exact response does not authorize drawing a
replacement key.  The theorem also does not cover arbitrary design
distributions, active querying, high-probability matching rates, or
polynomial-time evaluation of $G$.
\end{remark}
\section{Proof of the unstructured query barrier}
\label{app:unstructured-proof}

\begin{proof}[Proof of Proposition~\ref{thm:unstructured-barrier}]
Put the uniform prior on the $N_{\rm ctx}=2^{d-1}$ independent signs in
Eq.~\eqref{eq:unstructured-hard-family}.  Give the context vector $Z$ a
uniform marginal.  For each sign assignment, combining this marginal with
the specified conditional of $Y$ constructs a compatible joint law.

Fix any adaptive querying algorithm and condition on its transcript,
including its internal randomization.  An exact response determines the sign
at the queried context.  The signs at all unqueried contexts remain
independent and uniform under the prior.  If the algorithm predicts
$u\in(0,1)$ at one such context, its conditional Bayes loss is
\begin{align*}
  \frac12 D_{\mathrm{KL}}\!\left(
    \operatorname{Ber}(1/2+\gamma)\,\middle\|\,\operatorname{Ber}(u)
  \right)
  +\frac12 D_{\mathrm{KL}}\!\left(
    \operatorname{Ber}(1/2-\gamma)\,\middle\|\,\operatorname{Ber}(u)
  \right).
\end{align*}
The average log loss is minimized at $u=1/2$, where it equals
$\kappa(\gamma)$.  This lower bound applies to the conditionals induced by
any coherent joint output; requiring coherence cannot reduce it.

At most $B$ distinct contexts can have been queried, so at least
$(N_{\rm ctx}-B)_+$ remain unqueried.  Their contribution to the uniform-context
risk is at least $(1-B/N_{\rm ctx})_+\kappa(\gamma)$ on average under the prior.
Every other contribution is nonnegative.  Averaging over transcripts and
then taking the worst sign assignment proves the first inequality for every
randomized algorithm.  Finally, Pinsker's inequality for Bernoulli
distributions~\cite{tsybakov2009introduction}
gives $\kappa(\gamma)\ge2\gamma^2$; multiplication by the
nonnegative unqueried fraction proves the second inequality and the stated
query threshold.
\end{proof}
\section{Fixed-design structured bound}
\label{app:structured-upper-proof}

Equation~\eqref{eq:structured-upper} is a standard bounded-loss uniform
convergence bound.  We give the specialization to fix its constants, using
symmetrization, contraction, and bounded differences
\cite[Sections~2--3]{bartlett2002rademacher}.

For $L_\theta(z):=D_{\rm KL}(\operatorname{Ber}(p(z))\|
\operatorname{Ber}(\sigma(a(z)^\top\theta)))$, write
$v=a(z)^\top\theta$ and $A(v)=\log(1+e^v)$.  Then
$L_\theta=A(v)-p(z)v-H(p(z))$, $|v|\le M$, $0\le L_\theta\le C_M$,
and $|\partial L_\theta/\partial v|\le1$.
In the one-sided Rademacher convention (no absolute value inside the
supremum), centering and scalar contraction give
\[
 \mathbb E\sup_{\theta\in\Theta_R}\frac1B\sum_i\epsilon_i
       \{L_\theta(Z_i)-L_0(Z_i)\}
 \le\frac RB\mathbb E\left\|\sum_i\epsilon_i a(Z_i)\right\|_2
 \le R\sqrt{d/B}.
\]
Here $\epsilon_i$ are independent Rademacher signs and
$\|a(Z_i)\|_2^2=d$.
Each one-sided uniform deviation between $R_\mu$ and empirical risk
$\widehat R_B$ therefore has mean at most $2R\sqrt{d/B}$ and bounded
differences $C_M/B$.  McDiarmid's inequality and a union bound make both
deviations at most
$2R\sqrt{d/B}+C_M\sqrt{\log(2/\delta)/(2B)}$ with probability $1-\delta$.
Empirical optimality adds the two deviations, proving
Eq.~\eqref{eq:structured-upper}.

For Eq.~\eqref{eq:bounded-residual}, put $v_0=a(z)^\top\theta_0$.
The Bernoulli KL is the Bregman remainder
$A(v_0)-A(v_0+r)+A'(v_0+r)r\le r^2/8$, since $A''\le1/4$.
Average and compare with $\theta_0$.  This is a smoothness calculation, not
an additional reconstruction theorem.
\section{Compatible rank boundary}
\label{app:rank-proof}

To verify Eq.~\eqref{eq:compatible-rank}, put the uniform prior on the
Euclidean ball $\Theta_R\subset\mathbb R^m$.  An exact compatible response
reveals $a(z)^\top\theta$ after taking its logit.  Conditional on any
transcript whose measured rows have rank $k$, rotate coordinates so that the
first $k$ directions span those rows.  The unmeasured component remains
uniform on an $(m-k)$-dimensional ball whose squared radius is $R^2$ minus
the squared norm of the observed projection.  The same statement remains
true when each new row is chosen from previous observations: a new
independent row can reveal only one direction of this rotationally symmetric
residual ball.

For a uniform ball in $n$ dimensions with radius $s$, each coordinate has
second moment $s^2/(n+2)$ and the total second moment is
$ns^2/(n+2)$.  After $k$ independent measurements, iterating this identity
shows that the prior-averaged posterior squared error is
$R^2(m-k)/(m+2)$.  Repeated or dependent measurements can only leave more
unmeasured directions.  Equivalently, pad a shorter transcript with
independent measurement directions before taking the Bayes average;
additional information can only reduce posterior error.  Thus $k\le B$
yields the lower bound
$R^2(m-B)_+/(m+2)$ for any estimator, including randomized estimators.
The uniform full-key covariance obeys $\Sigma\succeq I_m/d$ and the
Bernoulli log-partition has curvature at least $\kappa_0$ on the bounded
student ball.  Therefore conditional KL from a compatible teacher
$\theta$ to an output $\widehat\theta$ is at least
$\kappa_0\|\widehat\theta-\theta\|_2^2/(2d)$.
The Bayes bound implies the stated worst-case bound.  Finally, the legal
rows span $\mathbb R^m$: a vector orthogonal to every row must have zero
intercept and edge coefficients by averaging each queried target over all
Rademacher contexts.  Hence $m$ linearly independent rows exist and their
exact logits determine $\theta$ uniquely.
\section{Hard-marginal optimality and certificates}
\label{app:transport-proof}

\begin{proposition}[Direction-weighted hard-marginal transport]
\label{prop:tfm-transport}
Suppose the direct marginals are strictly positive and normalized,
$c_{ab}^{(\omega)}>0$, and
$\underline q_{ab}=\alpha p_A(a\mid x)p_B(b\mid x)$ for $0<\alpha<1$.
For every fixed $\omega\in[0,1]$, Eq.~\eqref{eq:tfm-transport} has a unique
minimizer.  For any dual vector $\nu$ with
$\mathbf y:=\mathsf M^\top\nu>0$ componentwise, its cell response is
\[
 q_{ab}^{(\omega)}(\nu)
 =\max\{\underline q_{ab},c_{ab}^{(\omega)}/y_{ab}\}.
\]
For any exactly feasible $\mathbf q$,
\begin{equation}
 \begin{aligned}
 0
 &\le \mathcal J_x^{(\omega)}(\mathbf q)
       -\mathcal J_x^{(\omega)}(\mathbf q_x^{\star,(\omega)})
 \le G_{\mathrm{PD}}^{(\omega)}(\mathbf q,\nu),\\
 0
 &\le \mathcal J_x^{(\omega)}(\mathbf q)
       -\mathcal J_x^{(\omega)}(\mathbf q_x^{\star,(\omega)})
 \le G_{\mathrm{FW}}^{(\omega)}(\mathbf q).
 \end{aligned}
 \label{eq:tfm-certificate-bounds}
\end{equation}
where $G_{\mathrm{PD}}^{(\omega)}$ is the cellwise Fenchel primal--dual gap
and $G_{\mathrm{FW}}^{(\omega)}$ is the feasible Frank--Wolfe gap.  Projecting
$\nabla\mathcal J_x^{(\omega)}(\mathbf q)$ orthogonally off
$\operatorname{range}(\mathsf M^\top)$ does not change the latter gap.
\end{proposition}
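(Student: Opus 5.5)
The plan is to treat Eq.~\eqref{eq:tfm-transport} as a strictly convex separable problem over a polytope and read off every claim from finite-dimensional convex duality \cite[Chapter~5]{boyd2004convex}.

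\textbf{Existence and uniqueness.} The feasible set $\mathcal P_x$ is nonempty: $q^{\rm ind}$ lies in it because $\alpha<1$ and the marginals are normalized. It is also compact, being closed and contained in the simplex by the marginal constraints. On $\mathcal P_x$ every $q_{ab}\ge\underline q_{ab}>0$, so $\mathcal J_x^{(\omega)}$ is finite and continuous there. Since each $c_{ab}^{(\omega)}>0$, the objective is strictly convex in every coordinate. Hence a minimizer exists and it is unique.

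\textbf{Cell response.} Next I would write the partial Lagrangian
\[
\mathcal L(\mathbf q,\nu)=\sum_{a,b}\bigl(-c_{ab}\log q_{ab}+y_{ab}q_{ab}\bigr)-\nu^\top\mathbf r,
\qquad \mathbf y=\mathsf M^\top\nu ,
\]
and keep only the box $q_{ab}\ge\underline q_{ab}$ as an explicit constraint. The Lagrangian separates by cell. When $y_{ab}>0$, the function $q\mapsto -c\log q+yq$ is strictly convex with unconstrained minimizer $c/y$. Clipping that minimizer to $[\underline q_{ab},\infty)$ gives the stated response $\max\{\underline q_{ab},c_{ab}/y_{ab}\}$.

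\textbf{Primal--dual certificate.} The dual function is $g(\nu)=\sum_{ab}\min_{q\ge\underline q_{ab}}(\cdots)-\nu^\top\mathbf r$, evaluated at the cell response. Weak duality gives $g(\nu)\le\mathcal J(\mathbf q^\star)$. For exactly feasible $\mathbf q$, we therefore get $0\le\mathcal J(\mathbf q)-\mathcal J(\mathbf q^\star)\le\mathcal J(\mathbf q)-g(\nu)=:G_{\rm PD}$. Since $\mathsf M\mathbf q=\mathbf r$, this gap equals the cellwise Fenchel sum $\sum_{ab}[\,-c_{ab}\log q_{ab}+y_{ab}q_{ab}-\min_{q'\ge\underline q_{ab}}(-c_{ab}\log q'+y_{ab}q')\,]$. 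The nonnegativity of the left inequality is just optimality of $\mathbf q^\star$.

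\textbf{Frank--Wolfe certificate.} Convexity gives $\mathcal J(\mathbf q^\star)\ge\mathcal J(\mathbf q)+\nabla\mathcal J(\mathbf q)^\top(\mathbf q^\star-\mathbf q)$. It follows that $\mathcal J(\mathbf q)-\mathcal J(\mathbf q^\star)\le\max_{\mathbf s\in\mathcal P_x}\nabla\mathcal J(\mathbf q)^\top(\mathbf q-\mathbf s)=G_{\rm FW}$ \cite{jaggi2013revisiting}. This maximum is a linear program over a polytope, so it is attained.

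\textbf{Projection invariance.} Every feasible direction $\mathbf q-\mathbf s$ lies in $\ker\mathsf M$, because $\mathsf M\mathbf q=\mathsf M\mathbf s=\mathbf r$. Since $\ker\mathsf M=\operatorname{range}(\mathsf M^\top)^\perp$, adding any element of $\operatorname{range}(\mathsf M^\top)$ to the gradient leaves each inner product unchanged. Removing the orthogonal component therefore does not alter $G_{\rm FW}$.

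\textbf{Main obstacle.} The only step needing care is the exact form of $G_{\rm PD}$. The inner cell minimization is defined only when $y_{ab}>0$, which is why the proposition restricts to such $\nu$. The bound itself also relies on exact feasibility, since that is what lets the term $-\nu^\top\mathbf r$ cancel against $\sum y_{ab}q_{ab}$. I would state both points explicitly, and note that strong duality is not needed for the upper bound.
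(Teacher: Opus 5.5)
Your proposal is correct and follows the paper's proof essentially step for step. It uses compactness plus strict convexity for uniqueness, separable clipped cell minimization for the response, weak duality with the cancellation $\mathbf y^\top\mathbf q=\nu^\top\mathbf r$ for the Fenchel gap, the convexity bound for the Frank--Wolfe gap, and $\ker\mathsf M=\operatorname{range}(\mathsf M^\top)^\perp$ for projection invariance; the paper additionally records Slater's condition, KKT stationarity, and the dual Hessian, which, as you note, are not needed for the stated bounds.
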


\begin{proof}
Use standard convex duality~\cite[Chapter~5]{boyd2004convex} and the
Frank--Wolfe gap~\cite[Section~2]{jaggi2013revisiting}, with $x,\omega$
suppressed and cells vectorized.  Equation~\eqref{eq:tfm-coefficients}
already gives the objective reduction; the probability floor keeps $c_i>0$
also at $\omega=0,1$.  The independent joint is strictly above
$\underline{\mathbf q}$ and has the required marginals.  Thus $\mathcal P_x$
is nonempty and compact, and
$\nabla^2\mathcal J=\operatorname{diag}(c_i/q_i^2)\succ0$ gives uniqueness.

For $y=\mathsf M^\top\nu>0$, scalar minimization gives the stated clipped
response.  With $f_i(u):=-c_i\log u$, define
\[
 d_i(y_i):=\min_{u\ge\underline q_i}\{f_i(u)+y_i u\},
 \qquad D(\nu):=\sum_i d_i(y_i)-\nu^\top\mathbf r.
\]
Weak duality gives $D(\nu)\le\mathcal J(\mathbf q^\star)$.
The strictly feasible independent joint supplies Slater's condition;
KKT stationarity is
$y_i=c_i/q_i^\star+\lambda_i>0$, with $\lambda_i\ge0$ for the lower bounds.
The dual gradient is $\mathsf M\mathbf q(\nu)-\mathbf r$.  Away from cell
switches its Hessian is
\[
 -\mathsf M\operatorname{diag}\!\left\{
 c_i y_i^{-2}\mathbf 1_{\{q_i(\nu)>\underline q_i\}}
 \right\}\mathsf M^\top;
\]
the active-set limits give generalized Hessian elements at switches.

For exactly feasible $\mathbf q$, define
\begin{align*}
 G_{\rm PD}(\mathbf q,\nu)
 &:=\sum_i\{f_i(q_i)+y_iq_i-d_i(y_i)\}
   =\mathcal J(\mathbf q)-D(\nu),\\
 G_{\rm FW}(\mathbf q)
 &:=\max_{\mathbf s\in\mathcal P_x}
 \langle\nabla\mathcal J(\mathbf q),\mathbf q-\mathbf s\rangle.
\end{align*}
The first equality uses
$y^\top\mathbf q=\nu^\top\mathbf r$.  Weak duality and the standard
Frank--Wolfe bound give Eq.~\eqref{eq:tfm-certificate-bounds}.
Finally, if $\mathbf g-\mathbf g_\perp=\mathsf M^\top\mathbf v$, then every
feasible pair satisfies
\[
 \langle\mathbf g-\mathbf g_\perp,\mathbf q-\mathbf s\rangle
 =\mathbf v^\top(\mathsf M\mathbf q-\mathsf M\mathbf s)=0.
\]
Thus removing the equality-gradient component leaves the gap unchanged.
\end{proof}
\section{TFM development sequence and Car row-wise distribution}
\label{app:tfm-development-sequence}

Table~\ref{tab:tfm-sequence} retains the registered development failures that
preceded the confirmatory Car anchor.  T2 changed the numerical method rather
than retrospectively changing either earlier decision.
Figure~\ref{fig:tfm-paired-risks} shows the individual conditional-risk
reductions for the 64 confirmatory test rows.

\begin{table}[H]
  \centering
  \caption{Registered TFM development sequence.}
  \label{tab:tfm-sequence}
  \small
  \begin{tabular}{@{}l>{\raggedright\arraybackslash}p{0.34\linewidth}>{\raggedright\arraybackslash}p{0.48\linewidth}@{}}
    \toprule
    Stage & Intervention & Diagnostic and registered outcome \\
    \midrule
    T0 & Conditional-only projection & Mean direct-marginal TV drift $0.207$; failed marginal-fidelity gate \\
    T1 & Hard direct marginals with SLSQP & Only $8/64$ rows passed the main numerical gates; failed numerical certificate \\
    T2 & Hard direct marginals with transport solver & $64/64$ rows passed independent numerical checks on a new split; passed confirmatory gates \\
    \bottomrule
  \end{tabular}
\end{table}

\begin{figure}[H]
  \centering
  \includegraphics[width=0.82\linewidth]{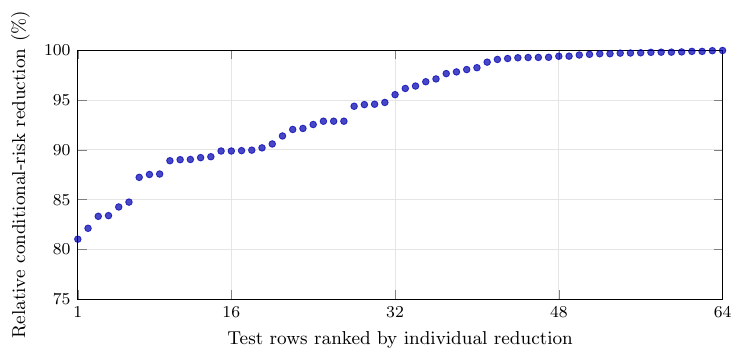}
  \caption{Row-wise relative reduction in paired direction-balanced
  conditional risk for all 64 test rows on one preregistered TabPFN-v2/Car
  split. Rows are sorted by their own reduction; the vertical axis is limited
  to 75--100\% to show within-split spread. Both risks use the same four
  frozen oracle predictions used in reconstruction, not held-out conditional
  predictions. Direct marginals are fixed by construction.}
  \label{fig:tfm-paired-risks}
\end{figure}
\section{Protocol and estimand distinctions}
\label{app:protocol-ledger}

Table~\ref{tab:supplementary-claim-evidence-bridge} records the observation
laws, charged budgets, and evaluation targets.  It expands the main-task
summary in Table~\ref{tab:claim-evidence-bridge}.  Exact frozen responses,
fresh noisy responses, and the TFM's fixed predictions define different
information structures; results are interpreted within their own rows.

\begingroup
\footnotesize
\setlength{\tabcolsep}{4pt}
\setlength{\LTcapwidth}{\linewidth}
\begin{longtable}{@{}>{\raggedright\arraybackslash}p{0.17\linewidth}
                   >{\raggedright\arraybackslash}p{0.37\linewidth}
                   >{\raggedright\arraybackslash}p{0.40\linewidth}@{}}
\caption{Observation protocols and evaluation targets. Rows define distinct
tasks, not interchangeable estimates of the passive minimax rate.}
\label{tab:supplementary-claim-evidence-bridge}\\
\toprule
Study & Observation and charged budget & Output, target, and scope \\
\midrule
\endfirsthead
\multicolumn{3}{@{}l}{\tablename~\thetable\ (continued)}\\
\toprule
Study & Observation and charged budget & Output, target, and scope \\
\midrule
\endhead
\midrule
\multicolumn{3}{r@{}}{Continued on next page}\\
\endfoot
\bottomrule
\endlastfoot
Unstructured coverage &
Exact probabilities; at most $B$ adaptive or nonadaptive context queries. &
Coherent joint; uniform-context conditional KL for a compatible teacher
(Proposition~\ref{thm:unstructured-barrier}). \\
\addlinespace
Passive pairwise projection &
Exact frozen probabilities; $B$ uniform iid slots, with repeats charged. &
Bounded pairwise joint; expected same-table best-projection excess KL
within Theorem~\ref{thm:passive-minimax}'s radius and budget window. \\
\addlinespace
Fixed-pool recovery &
Quantized logits; prefixes $128,256,512$; $4{,}096$ reference calls per
target charged separately. &
Least-squares parameters; distance to the frozen full-pool target in
Eq.~\eqref{eq:p2-fixed-pool-target}. \\
\addlinespace
Restricted auxiliary theorem &
Fresh Rademacher logit noise; 18 diagnostic and two local calls. &
Non-joint table; expected full-key loss and E-AUC; 16 static allocations
and response-independent mixtures (Theorem~\ref{thm:restricted-adaptivity}). \\
\addlinespace
Gaussian comparison &
Fresh Gaussian logit noise, standard deviation $0.05$; 18+2 calls. &
Non-joint table E-AUC; two frozen families and two endpoints, with a
different noise law from Theorem~\ref{thm:restricted-adaptivity}. \\
\addlinespace
Semisynthetic bridge &
Public-data context structure and weights; generated oracle with fresh
Gaussian query noise. &
Weighted table E-AUC and minimum static-minus-adaptive contrasts;
the all-dataset gate fails. \\
\addlinespace
Hard-marginal TFM study &
Frozen direct marginals and both conditional families; four prediction
tasks per unit; reconciliation adds no model calls. &
Hard-marginal joint; balanced and opposite-direction risks against the
same rows' frozen predictions, not new labels. \\
\end{longtable}
\endgroup
\section{Auxiliary conditional-table task and construction}
\label{app:auxiliary-query-design}
\label{app:adaptive-families}

This supplementary task compares response-adaptive and response-independent
allocation under a shared learner with a conditional-table output.
Classical active data selection provides background
\cite{mackay1992information,cohn1996active}.
The finite construction below fixes its families, noisy observations, and
16-policy static comparator; Theorem~\ref{thm:restricted-adaptivity}
states the separation and Appendix~\ref{app:adaptive-proof} proves it.

\subsection{Task and loss}

For each full-conditional key $k$ of five binary variables, let $\ell(k)$ denote the teacher logit and
$\widehat\ell_B(k)$ the learner's reported logit at budget $B$.  Define the
auxiliary full-key table loss
\begin{equation}
  \widetilde R_{\rm full}(\widehat\ell_B;\ell)
  :=\frac1{80}\sum_{k=1}^{80}
  D_{\mathrm{KL}}\!\left(
    \operatorname{Ber}(\sigma(\ell(k)))\,\middle\|\,
    \operatorname{Ber}(\sigma(\widehat\ell_B(k)))
  \right),
  \label{eq:auxiliary-table-risk}
\end{equation}
where the 80 keys comprise five targets and 16 contexts per target.  For a
fixed family $F$ and scalar amplitude $\tau$, set
\begin{equation}
  \mathrm{E\text{-}AUC}_{F,\tau}(\pi)
  :=\frac12\sum_{B\in\{18,20\}}
  \mathbb E_{Z,\,\xi}^{\pi}
  \widetilde R_{\rm full}(\widehat\ell_B^{\pi};\ell^{F,Z,\tau}),
  \label{eq:auxiliary-eauc}
\end{equation}
where the superscript indicates that the expectation includes any policy
randomization, in addition to the uniform hidden regime $Z$ and fresh query
noise $\xi$.  E-AUC is therefore the registered two-budget, equal-weight average of
expected full-key table losses at 18 and 20 charged calls; smaller values
indicate better conditional-table prediction.  This table loss does not
require the reported conditionals to be compatible and is \emph{not} the coherent-joint risk
$R_\mu(q;\mathcal O)$ from Eq.~\eqref{eq:conditional-kl-risk}.

\subsection{Restricted construction and shared learner}

Let $X_0,\ldots,X_4\in\{-1,+1\}$ and evaluate uniformly over all five
single-target full-conditional keys and all 16 contexts per target.
Fix $L=1/2$, $a=\delta=\nu=1/20$, and
$\tau\in[19/100,1/5]$.  The known family $F$ is either $A$ or $B$;
within a family the hidden regime $Z$ is uniform on $\{-1,+1\}$.
Put $\psi(x)=x_1x_2x_3x_4$ and define the nonzero teacher logits by
\begin{align}
  \ell_0^{A,Z,\tau}(x_{1:4})
    &=a x_1+\delta Z\psi(x)+\tau\phi_Z^A(x),
  &\ell_1^{A,Z,\tau}(x_{-1})&=a x_0,
  \label{eq:a1-family-a}\\
  \ell_0^{B,Z,\tau}(x_{1:4})
    &=a x_2+\delta Z\psi(x)+\tau\phi_Z^B(x),
  &\ell_2^{B,Z,\tau}(x_{-2})&=a x_0,
  \label{eq:a1-family-b}
\end{align}
with every other target logit zero and
\begin{equation}
  \phi_Z^A(x)=\mathbf 1_{\{x_1=Z\}}\frac{(x_2-x_3)x_4}{2},
  \qquad
  \phi_Z^B(x)=\mathbf 1_{\{x_2=Z\}}\frac{(x_1-x_3)x_4}{2}.
  \label{eq:a1-local-features}
\end{equation}
All logits are in $[-L,L]$.  These are genuinely incompatible conditional
families: at a context where the local feature vanishes, the cross
difference of the target-0 logit in the reciprocal pair is
$2a+2\delta Z$, whereas the reverse target logit gives $2a$.

The diagnostic key is target 0 at context $(1,1,1,1)$.  Its mean logit is
$\mu_Z=a+\delta Z$ and its local feature is zero.  Each charged query
returns the teacher logit plus a fresh independent noise
$\xi\in\{-\nu,+\nu\}$ with equal probability.  Thus the 18 repeated
diagnostic calls are 18 noisy observations, unlike repeated keys in the
exact-table protocol.  The common selector chooses the nearer of
$\mu_{-1}$ and $\mu_{+1}$ to their sample mean, with a fixed lexical
tie rule.

For each regime, the two registered local keys are target-0 contexts
\begin{align*}
  D_Z^A&=\{(Z,1,-1,1),(Z,-1,1,1)\},\\
  D_Z^B&=\{(1,Z,-1,1),(-1,Z,1,1)\}.
\end{align*}
Their local features in the correct regime are $+1$ and $-1$.
The adaptive policy chooses $D_{\widehat Z}^F$ after diagnosis.
Every deterministic static policy fixes an ordered pair from
$D_{-1}^F\cup D_{+1}^F$ before seeing responses, giving $4^2=16$
possibilities; response-independent random mixtures are also admitted.
Both kinds of policy spend the same 18 diagnostic and two local calls.

The common learner uses diagnostic calls only for $\widehat Z$.
Given two local responses $Y_i$ at keys $k_i$, it subtracts the public
known-logit map $h_{F,\widehat Z}(k_i)$ and fits the scalar coefficient
\begin{equation}
  \widehat\tau
  :=\operatorname*{arg\,min}_{t\in\mathbb R}
  \left\{\frac12\sum_{i=1}^2
  \big(Y_i-h_{F,\widehat Z}(k_i)
       -\phi_{\widehat Z}^F(k_i)t\big)^2+t^2\right\}.
  \label{eq:a1-common-ridge}
\end{equation}
The strictly convex objective has the unique minimizer
\[
  \widehat\tau
  =\frac{\sum_{i=1}^2\phi_{\widehat Z}^F(k_i)
                 (Y_i-h_{F,\widehat Z}(k_i))}
         {2+\sum_{i=1}^2(\phi_{\widehat Z}^F(k_i))^2}.
\]
Here $h_{F,z}$ is the corresponding $a x_1$ or $a x_2$ pairwise term,
$\delta z\psi$ at target 0, the reciprocal $a x_0$ term at target 1 or 2,
and zero elsewhere; $\phi_z^F$ is zero off target 0.
The reported full-key logit is
$\operatorname{clip}_{[-L,L]}
(h_{F,\widehat Z}+\phi_{\widehat Z}^F\widehat\tau)$.
This output is a conditional table, not a coherent joint.  Indeed, in family
$A$ fix $x_2=x_3=x_4=1$ and vary $x_1$.  The local feature vanishes and
clipping is inactive; the reported target-0 logit changes by
$2a+2\delta\widehat Z$, while the reciprocal target-1 logit changes by
$2a$ when $x_0$ varies.  A positive joint requires these cross-differences
to agree.  In family $B$ the same mismatch uses targets 0 and 2, fixing
$x_1=x_3=x_4=1$ and varying $x_2$.  Thus no joint can in general induce the
reported table, irrespective of the fitted $\widehat\tau$.
At budget 18, no local coefficient is fitted and the common default is
$\widehat\tau=0$, giving identical outputs for all policies.
\Needspace{20\baselineskip}
\section{Restricted response-adaptive separation}
\label{app:restricted-separation}

\begin{theorem}[Restricted conditional-table response-adaptive separation]
\label{thm:restricted-adaptivity}
Fix $F\in\{A,B\}$, the corresponding binary conditional-logit family in
Appendix~\ref{app:adaptive-families}, a uniform hidden regime
$Z\in\{-1,+1\}$, and $\tau\in[0.19,0.20]$.  Every policy uses the same
18-call diagnostic selector, two local calls with independent Rademacher
logit noise of magnitude $0.05$, the two-row ridge learner with $\lambda=1$,
and the auxiliary full-key conditional-table loss $\widetilde R_{\rm full}$
in Eq.~\eqref{eq:auxiliary-table-risk}.  Let $\Pi_{\rm stat}^F$ contain all
16 deterministic ordered allocations of the two local calls among the four
frozen local keys and all their response-independent randomizations.  At
budget 20, the response-adaptive allocation has lower expected risk than
every policy in $\Pi_{\rm stat}^F$; at budget 18, all policies have the same
diagnostic calls and learner and have equal risk.  Therefore, with equal E-AUC
weights on the two budgets,
\begin{equation}
  \inf_{\pi\in\Pi_{\rm stat}^F}
  \mathrm{E\text{-}AUC}_{F,\tau}(\pi)
  -\mathrm{E\text{-}AUC}_{F,\tau}(\pi_{\rm adapt})
  \ge 7\times10^{-6},
  \qquad F\in\{A,B\},
  \label{eq:restricted-adaptivity}
\end{equation}
\end{theorem}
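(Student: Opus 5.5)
The theorem is a finite computation, so I would prove it by explicit enumeration. The E-AUC contrast is an equal-weight average over budgets $18$ and $20$.

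At budget $18$, every policy has made only the $18$ diagnostic calls. The learner uses the same selector, $\widehat Z$, and the default $\widehat\tau=0$. The reported table is therefore a deterministic function of the diagnostic responses alone, and the budget-$18$ expected losses coincide across all policies. The contrast thus reduces to $\tfrac12$ times the budget-$20$ gap. It then suffices to show
\[
\inf_{\pi\in\Pi_{\rm stat}^F}\mathbb E\,\widetilde R_{\rm full}(\widehat\ell_{20}^{\pi})-\mathbb E\,\widetilde R_{\rm full}(\widehat\ell_{20}^{\rm adapt})\ge 1.4\times10^{-5}.
\]
Because a response-independent randomization yields a convex combination of the expected losses of its deterministic components, the infimum over $\Pi_{\rm stat}^F$ is attained at one of the $16$ deterministic ordered allocations. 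It therefore suffices to compare the adaptive policy with each of these $16$ policies.

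Next I condition on the diagnosis. Given $Z$, the diagnostic sample mean is $\mu_Z$ plus an average of $18$ independent $\pm\nu$ signs. The selector errs only if that average crosses the midpoint, i.e. moves by at least $\delta=\nu$. This requires every one of the $18$ noise signs to point the wrong way, or a tie, which the lexical rule resolves. So $\Pr(\widehat Z\ne Z)$ is an explicit binomial quantity, $2^{-18}$ or $0$ depending on the tie convention. Conditional on $(Z,\widehat Z)$ and the two local keys, $\widehat\tau$ is given by the displayed closed form. It is an affine function of the two Rademacher local noises, so it takes four values with probability $1/4$ each.

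On the event $\widehat Z=Z$, the adaptive policy queries $D_Z^F$, whose features are $\pm1$. This gives $\widehat\tau=(2\tau\pm\text{noise})/4$, i.e. a shrunken but informative estimate of $\tau$. A static policy instead places each local call in $D_{-1}^F$ or $D_{+1}^F$ without regard to $Z$. When a call lands in the wrong-regime set, the fitted feature $\phi_{\widehat Z}^F$ vanishes there, so that call contributes nothing to $\widehat\tau$, and only the denominator changes. Averaging over the uniform $Z$, every static allocation uses fewer informative rows on average than the adaptive one. Its expected $\widehat\tau$ is closer to $0$, since the denominator is $2+$(number of informative rows), so its estimate is worse.

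The loss depends on $\widehat\tau$ only through the keys where $\phi_Z^F\ne0$. There the reported logit is $h+\phi\widehat\tau$, clipped, against the truth $h+\phi\tau$. All logits lie in $[-1/2,1/2]$, so clipping is inactive. I would therefore write the budget-$20$ expected loss as an explicit finite average of Bernoulli KL terms over the $80$ keys, conditional on $(Z,\widehat Z,\text{noise})$. This computation is routine.

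The main obstacle is certifying the numerical margin $7\times10^{-6}$ uniformly over $\tau\in[0.19,0.20]$ and over all $16$ allocations. The gap is tiny because the relevant logits are small, which makes KL approximately quadratic in the logit error. I would first bound each Bernoulli KL between $\kappa_L e^2/2$ and $e^2/8$, where $e$ is the logit error and $\kappa_L=\sigma(L)(1-\sigma(L))$. This reduces the comparison to expected squared errors of $\widehat\tau$ weighted by feature counts, which are explicit quadratics in $\tau$. If the curvature slack between $\kappa_L/2$ and $1/8$ destroys the margin, I would fall back to exact interval arithmetic over the finitely many outcomes, with Lipschitz control in $\tau$ on a fine grid. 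The misdiagnosis event, of probability at most $2^{-18}$, enters both sides with bounded loss and can be absorbed into the margin.
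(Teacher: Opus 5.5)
Your plan is correct and follows the paper's proof. Both cancel budget 18, sandwich each Bernoulli KL between $\kappa_L e^2/2$ and $e^2/8$ so that everything reduces to the ridge error $\mathcal E_\tau(n)=(4\tau^2+n\nu^2)/(n+2)^2$ on the four nonzero-feature keys (weight $1/20$), compare $n=2$ for the adaptive policy with one informative row on average for static allocations, absorb misdiagnosis with loss at most $L^2/2$, and evaluate the static bound at $\tau=0.19$ and the adaptive bound at $\tau=0.20$.

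There are two minor differences, and neither matters. Your exact tie analysis gives misdiagnosis probability at most $2^{-18}$, sharper than the paper's sub-Gaussian bound $e^{-9}$. Your enumeration of $(n_+,n_-)\in\{(2,0),(1,1),(0,2)\}$ replaces the paper's Jensen step; note that Jensen needs convexity of $\mathcal E_\tau$, which your ``fewer informative rows'' heuristic alone does not supply, but the enumeration avoids that. In either form the plain curvature sandwich already leaves a budget-20 gap of about $3\times10^{-5}$, or about $1.6\times10^{-5}$ even with $e^{-9}$, so the interval-arithmetic fallback is unnecessary.
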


The families and common learner are specified in
Appendix~\ref{app:adaptive-families}; the proof is in
Appendix~\ref{app:adaptive-proof}.  This is a finite-construction result
for the stated conditional-table task.  The Gaussian and public-data-weighted
studies in Appendix~\ref{app:supplementary-studies} examine distinct
observation laws and weighting, including the observed transfer failure.
\Needspace{9\baselineskip}
\section{Proof of the restricted separation}
\label{app:adaptive-proof}

\begin{proof}[Proof of Theorem~\ref{thm:restricted-adaptivity}]
The two diagnostic means are separated by $2\delta=0.1$.  A
sub-Gaussian tail bound for the average of 18 independent noises gives,
for each $Z$ and either family,
\begin{equation}
  \Pr(\widehat Z\ne Z\mid Z)
  \le\epsilon
  :=\exp\!\left(-\frac{18(2\delta)^2}{8\nu^2}\right)
  =\exp(-9).
  \label{eq:a1-diagnostic-error}
\end{equation}
The local noises are independent of the diagnostic transcript.
On correct diagnosis, let $n\in\{0,1,2\}$ be the number of local calls
whose feature is nonzero for the true regime.  From
Eq.~\eqref{eq:a1-common-ridge},
\[
  \widehat\tau_n
  =\frac{n\tau+\sum_{i=1}^{n}\phi_i\xi_i}{n+2},
  \qquad
  \mathcal E_\tau(n):=\mathbb E(\widehat\tau_n-\tau)^2
  =\frac{4\tau^2+n\nu^2}{(n+2)^2}.
\]
The diagnostic rows do not enter this two-row ridge normalization.
For every $\tau$ in the stated interval, $\mathcal E_\tau(n)$ is decreasing
and convex on $[0,2]$: direct differentiation gives a negative first
derivative because $\nu^2<4\tau^2$, and a nonnegative second derivative
because $\nu^2<3\tau^2$.

The local feature is nonzero at four of the 16 target-0 contexts and zero
for the other four targets.  Thus the full-key risk weight of the scalar
error on correct diagnosis is $w=4/(5\cdot16)=1/20$.
The true and reported logits lie in $[-L,L]$, and the clipping is inactive
on correct diagnosis.  Logistic curvature therefore bounds the risk there
between $(m_L/2)w\mathcal E_\tau(n)$ and
$(1/8)w\mathcal E_\tau(n)$, where
$m_L=\sigma(L)(1-\sigma(L))$.  On wrong diagnosis, clipping bounds the
entire full-key risk by $L^2/2$.

For any deterministic static ordered pair, let $n_+$ and $n_-$ count its
calls in the two regime blocks.  Since $n_++n_-\le2$ and $Z$ is uniform,
$\mathbb E n_Z\le1$.
This remains true under response-independent
randomization.  It would be incorrect to apply this count after conditioning
on correct diagnosis, because the selector's lexical tie rule can change
the conditional regime weights.  Instead, fix $Z=z$ and the static
allocation randomness $U$, and define the success probability
$s_z:=\Pr(\widehat Z=z\mid Z=z)\ge1-\epsilon$.
Independence of local noise gives
\[
  \mathbb E[\widetilde R_{\rm full}^{\rm stat}
             \mathbf 1_{\{\widehat Z=Z\}}
             \mid Z=z,U]
  \ge s_z\frac{m_L}{2}w\mathcal E_\tau(n_z).
\]
Now average over the unconditioned uniform $Z$ and independent $U$.
Jensen's inequality and monotonicity of $\mathcal E_\tau$ yield
\begin{equation}
  \mathbb E\widetilde R_{\rm full}^{\rm stat}
  \ge(1-\epsilon)\frac{m_L}{2}w\mathcal E_\tau(1).
  \label{eq:a1-static-lower}
\end{equation}
The adaptive policy has $n=2$ on correct diagnosis.  Its unconditional
risk therefore satisfies
\begin{equation}
  \mathbb E\widetilde R_{\rm full}^{\rm adapt}
  \le(1-\epsilon)\frac{w}{8}\mathcal E_\tau(2)
       +\epsilon\frac{L^2}{2}.
  \label{eq:a1-adaptive-upper}
\end{equation}
Here replacing the actual error probability by $\epsilon$ is valid
because the bad-event bound exceeds the correct-event bound.

Both bounds hold for each fixed family and every
$\tau\in[0.19,0.20]$.  The static lower expression is minimized at
$\tau=0.19$, while the active upper expression is maximized at
$\tau=0.20$.  Direct substitution gives
\[
  \mathcal E_{0.19}(1)=0.016322222\ldots,
  \quad \mathcal E_{0.20}(2)=0.0103125,
\]
\[
  \operatorname{LB}_{20}^{\rm stat}>9.5882\times10^{-5},
  \qquad
  \operatorname{UB}_{20}^{\rm adapt}<7.9872\times10^{-5}.
\]
Their difference exceeds $1.60\times10^{-5}$.
At budget 18 the shared diagnostic transcript, selector, and default
coefficient yield exactly equal risks.  Half-weighting budgets 18 and 20
therefore gives an E-AUC difference greater than
$8.0\times10^{-6}>7\times10^{-6}$ for each of the 16 static policies
and any response-independent mixture.
\end{proof}
\section{Supplementary bounds and query studies}
\label{app:supplementary-studies}

The fixed-pool study measures quantized parameter recovery; the Gaussian
and semisynthetic studies assess the non-joint conditional-table task in
Appendix~\ref{app:auxiliary-query-design}.  Their protocols and estimands are
compared in Table~\ref{tab:supplementary-claim-evidence-bridge}.

\subsection{Fixed-design upper bound and approximation}
\label{sec:structured-upper}

Use the pairwise joint and feature rows of
Eqs.~\eqref{eq:pairwise-student}--\eqref{eq:pairwise-logit}.
In this subsection, let $\mu$ be any fixed,
sampleable distribution over such keys, let the oracle return a deterministic
exact probability $p(z)\in[0,1]$, and draw $B$ keys independently from $\mu$.
If $\widehat\theta_B$ minimizes empirical conditional-KL risk over
$\Theta_R$, write $M=R\sqrt d$ and $C_M=\log(1+e^M)$.  For every
$\delta\in(0,1)$, the structured oracle inequality is
\begin{equation}
\Pr\!\left\{
  R_\mu(q_{\widehat\theta_B};\mathcal O)
  -\inf_{\theta\in\Theta_R}R_\mu(q_\theta;\mathcal O)
  \le 4R\sqrt{\frac dB}
  +2C_M\sqrt{\frac{\log(2/\delta)}{2B}}
\right\}\ge1-\delta.
\label{eq:structured-upper}
\end{equation}
This applies standard Rademacher uniform convergence
\cite{bartlett2002rademacher}; Appendix~\ref{app:structured-upper-proof}
checks the loss envelope, feature norm, and constants.  The comparator
remains the best coherent pairwise projection for the given table.

If, additionally, the oracle logit is $a(z)^\top\theta_0+r(z)$ for some
$\theta_0\in\Theta_R$ and $\mathbb E_\mu r(Z)^2\le\rho^2$, then the
logistic log-partition's $1/4$ smoothness gives
\begin{equation}
  I_\mu(\mathcal O)+A_{\mu,\mathcal Q_R}(\mathcal O)
  =\inf_{\theta\in\Theta_R}R_\mu(q_\theta;\mathcal O)
  \le\rho^2/8.
  \label{eq:bounded-residual}
\end{equation}
This controls the sum of irreducible incompatibility and structural
approximation; it does not identify either term separately.

\subsection{Budget trend in a frozen structured family}
\label{sec:structured-experiment}

The structured study uses a frozen family and a fixed pool of candidate
queries.  Checks of held-out joint risk, support, and parameter reporting
covered 450 eligible rows across dimensions $16$, $32$, and $64$.
The independently frozen quantized study uses resolution $\eta=10^{-3}$.  For each target, the
complete frozen candidate pool $\mathcal C$ contains 4,096 keys, its design
matrix is $A_{\mathcal C}\in\mathbb R^{4096\times d}$, and
$\ell_{\eta,\mathcal C}\in\mathbb R^{4096}$ is the vector of quantized
oracle logits.  The evaluation reference is the unique full-column-rank
least-squares target
\begin{equation}
  \beta_{\eta,\mathcal C}
  :=\operatorname*{arg\,min}_{\beta\in\mathbb R^d}
  \|A_{\mathcal C}\beta-
  \ell_{\eta,\mathcal C}\|_2^2.
  \label{eq:p2-fixed-pool-target}
\end{equation}
The stored design audit verifies that $A_{\mathcal C}$ has full column rank,
so the displayed minimizer is single-valued.  The learner instead uses the
first $B\in\{128,256,512\}$ rows of the fixed ordering and reports the
corresponding prefix least-squares estimate $\widehat\beta_B$.  The 4,096
reference calls per target serve to
define the evaluation target and are accounted for separately from the
learner's $B$ calls.  Table~\ref{tab:p2-budget} reports the median parameter
distance $\|\widehat\beta_B-\beta_{\eta,\mathcal C}\|_2$ among eligible
targets at each budget.  The median support F1 and the
support-certificate-stable fraction were both $1$ in all nine
dimension--budget cells.

\begin{table}[H]
  \centering
  \small
  \caption{Structured quantized recovery against the frozen fixed-pool target.
  Entries at each budget are median quantized-prefix excess $\ell_2$ error
  in units of $10^{-4}$.  Slopes and $R^2$ use only the three displayed
  budget points for each dimension; the last column is the fit $R^2$.}
  \label{tab:p2-budget}
  \begin{tabular}{rrrrrr}
    \toprule
    $d$ & $B=128$ & $B=256$ & $B=512$ & log--log slope & fit $R^2$ \\
    \midrule
    16 & 1.93670 & 1.17407 & 0.853482 & $-0.591$ & 0.984 \\
    32 & 3.03878 & 2.00064 & 1.26002  & $-0.635$ & 0.999 \\
    64 & 4.93385 & 2.85877 & 1.80692  & $-0.725$ & 0.998 \\
    \bottomrule
  \end{tabular}
\end{table}

The log--log slopes in Table~\ref{tab:p2-budget} fit exactly the three
registered budget points for each dimension.  Their negative signs show a
within-family decrease in this particular fixed-pool error measure.  The
earlier development track did not meet its frozen all-dimension stability
criterion; the independently frozen study reported here passed it.  Neither
the slopes nor
the support scores establish a general finite-bit or $\varepsilon$-dependent
rate, an active-design advantage, or the expected conditional-KL minimax
rate of the exact-table theorem.

\subsection{Restricted Gaussian query-design comparison}
\label{sec:gaussian-experiment}

The Gaussian comparison keeps the two frozen conditional-logit families, two
parameter endpoints, common learner, and all 16 response-independent
two-local-call static policies from the restricted separation, but uses
fresh Gaussian logit noise with standard deviation $0.05$ rather than the
theorem's bounded Rademacher noise.  Its registered
core consists of eight anchors and 60 replications per anchor.  Every one of
the $3{,}840$ recorded paired static-minus-active E-AUC gaps is positive, as
is the worst-policy score in each of the 60 replication indices.  The mean
paired gap is $3.305842798\times10^{-5}$ and the smallest recorded gap is
$5.560061386\times10^{-7}$.  The pooled active mean E-AUC is about $18.1\%$
below the pooled static mean under the study's descriptive denominator.
These observations describe conditional-table prediction in the frozen
Gaussian construction.  They establish neither a coherent-joint active
advantage nor an expanded comparator class or observation law.

\subsection{Public-data structural weighting and transfer boundary}
\label{sec:semisynthetic-experiment}

The semisynthetic bridge uses five public OpenML datasets: letter
(ID 6)~\cite{dataset_letter}, mfeat-morphological
(ID 18)~\cite{dataset_mfeat}, mushroom (ID 24)~\cite{dataset_mushroom},
nursery (ID 26)~\cite{dataset_nursery}, and page-blocks
(ID 30)~\cite{dataset_page_blocks}, all version 1.  Three fixed splits and
four generator families produce 60 reference units.  The public
data determine the selected binary context structure and weights; the
conditional-logit oracle in this comparison is generated.  This is a
real-structure stress test of the restricted auxiliary conditional-table
result, not a five-dataset evaluation of a real tabular foundation model or
of active coherent reconstruction.

For each reference unit and replication, the recorded contrast is the
minimum static-minus-adaptive E-AUC gap over the frozen family/parameter
anchors and 16 static policies.  A reference score averages its 60
replication minima; a dataset score averages its 12 reference scores (three
splits and four generators).  Thus the plotted statistic is a nested
worst-comparator contrast, not an average-static-policy gap.

The registered global score, all four generator scores, and 53 of 60
reference scores favor the response-adaptive policy.  The stronger registered
all-dataset requirement fails: only four of five dataset scores favor it, and
mushroom is negative.  In mushroom split 0, all four generator references are
negative and 234 of 240 replication minima are nonpositive.  The registered
decision remains \texttt{bridge\_not\_passed}; positive aggregate results do
not reverse that decision.  Table~\ref{tab:transfer-boundary} displays the
positive core evidence beside the observed transfer failure, while
Figure~\ref{fig:transfer-dataset-scores} shows the five dataset-level signs.

\Needspace{7\baselineskip}
A post-hoc within-design accounting attributes $98.42\%$ of the mushroom
split-0 negative contribution mass to three high-weight contexts.  This
locates the arithmetic concentration of the reversal; it does not identify a
causal feature or establish how to prevent the failure.  The permitted
conclusion is an aggregate and majority-reference advantage with a verified
dataset/split heterogeneity boundary.

\begin{table}[t]
  \centering
  \small
  \caption{Restricted query-design evidence and its transfer boundary.
  Positive auxiliary conditional-table contrasts favor the
  response-adaptive policy over the frozen response-independent static
  comparator.  The semisynthetic all-dataset gate was not met.}
  \label{tab:transfer-boundary}
  \begin{tabular}{@{}>{\raggedright\arraybackslash}p{0.22\linewidth}>{\raggedright\arraybackslash}p{0.34\linewidth}>{\raggedright\arraybackslash}p{0.34\linewidth}@{}}
    \toprule
    Study & Positive evidence & Registered boundary \\
    \midrule
    Gaussian core & $3{,}840/3{,}840$ paired gaps positive;
    mean gap $3.3058\times10^{-5}$ & Two families, two endpoints,
    16 static policies \\
    Semisynthetic bridge & Global and $4/4$ generator scores positive;
    $53/60$ references positive & $4/5$ dataset scores positive;
    mushroom negative; all-dataset gate failed \\
    \bottomrule
  \end{tabular}
\end{table}
\begin{figure}[!htbp]
  \centering
  \includegraphics[width=0.84\linewidth]{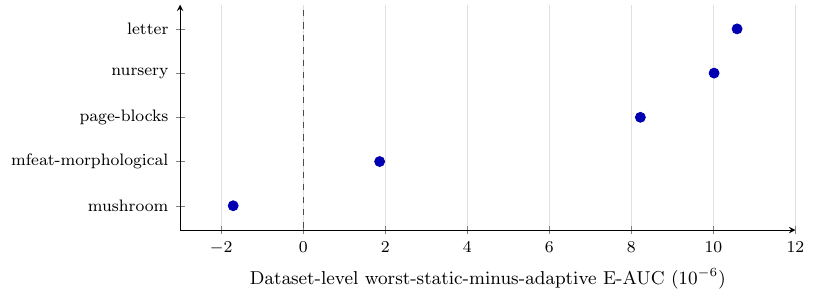}
  \caption{Registered semisynthetic boundary for the auxiliary
  conditional-table task. Each reference score averages 60 replication-wise
  minima of static-minus-adaptive E-AUC gaps (the two-budget expected table-loss
  summary in Eq.~\eqref{eq:auxiliary-eauc}) over the frozen anchors and 16
  static policies; each point averages 12 reference scores for one public
  dataset (three splits, four generators). Positive values favor adaptation.
  The oracle is generated; the all-dataset gate failed because mushroom is
  negative. No coherent-joint or dataset-uniform active advantage is claimed.}
  \label{fig:transfer-dataset-scores}
\end{figure}

\subsection{Broader prior-work comparison}

Table~\ref{tab:broader-prior-work} retains additional related results.
Each row describes the cited result only; an absence of the present
finite-query statement in a row is not a literature-wide novelty claim.

\begingroup
\scriptsize
\setlength{\tabcolsep}{4pt}
\setlength{\LTcapwidth}{\linewidth}
\begin{longtable}{@{}>{\raggedright\arraybackslash}p{0.16\linewidth}>{\raggedright\arraybackslash}p{0.26\linewidth}>{\raggedright\arraybackslash}p{0.26\linewidth}>{\raggedright\arraybackslash}p{0.23\linewidth}@{}}
\caption{Assumption and target comparison with closest prior work. ``Not analyzed''
means not part of the cited result, not an absence claim about the literature.}
\label{tab:broader-prior-work}\\
\toprule
Work & Input/oracle assumption & Output and objective & Finite-query result \\
\midrule
\endfirsthead
\multicolumn{4}{@{}l}{\tablename~\thetable\ (continued)}\\
\toprule
Work & Input/oracle assumption & Output and objective & Finite-query result \\
\midrule
\endhead
\midrule
\multicolumn{4}{r@{}}{Continued on next page}\\
\endfoot
\bottomrule
\endlastfoot
Majid et al.~\cite{majid2025consistent} & Family of subset conditionals; joint compatibility at issue & Conditions for consistent joint recovery & Not analyzed \\
Arnold et al.~\cite{arnold2002exact} & Two finite discrete conditional families & Exact, near, and $\epsilon$-compatibility; compatible-table adjustment & Not analyzed \\
Mur\'e~\cite{mure2019optimal} & Potentially incompatible conditionals & Pseudo-Gibbs stationary compromise; mean-squared misfit & Not analyzed \\
Chen et al.~\cite{chen2011gibbs} & Nearly compatible or incompatible conditional models & Gibbs-ensemble joint compromise & Not analyzed \\
Heckerman et al.~\cite{heckerman2000dependency} & Locally specified dependency network; possible inconsistency & Gibbs-like inference and joint behavior & Not for this oracle \\
Picado Mui\~no~\cite{picadomuino2011repairing} & Explicit probabilistic knowledge base & Consistency-restoring probability adjustments & Not analyzed \\
Thimm~\cite{thimm2013inconsistency} & Explicit probabilistic conditional logic & Distance-based inconsistency measures & Not analyzed \\
Potyka--Thimm~\cite{potyka2014consolidation} & Explicit probabilistic conditional knowledge base & Consolidation through generalized models & Not analyzed \\
Nguyen et al.~\cite{nguyen2021probabilistic} & Explicit probabilistic knowledge bases, possibly inconsistent and structurally diverse & Divergence-based consistency recovery and merging & Not analyzed \\
Barik--Honorio~\cite{barik2019learning} & Conditional queries; low-rank BN tables and compatible teacher & Exact directed BN structure & Polynomial under stated assumptions \\
Torroba Hennigen--Kim~\cite{torroba2023deriving} & Masked-LM conditionals; two-token analysis & Explicit joint; conditional closeness & Not analyzed \\
Liu et al.~\cite{liu2023understanding} & Deep generative teacher & Tractable probabilistic circuit distillation & Not for this conditional oracle \\
Blanca et al.~\cite{blanca2023complexity} & Coordinate-conditional samples from a hidden joint & Identity test & Bounds for their sampling oracle \\
Kl\"otergens et al.~\cite{klotergens2026agree} & TFM predictive conditionals & Consistency diagnostics & Not a repair guarantee \\
This work & Possibly incompatible, probability-vector conditional oracle & Coherent-joint projection; conditional-KL excess risk & Matching minimax only for the frozen passive-i.i.d. exact-table protocol and window \\
\end{longtable}
\endgroup

\end{document}